\newtheorem{thm}{Theorem}[section]
\newtheorem{lem}[thm]{Lemma}
\newtheorem{cor}[thm]{Corollary}
\newtheorem{pro}[thm]{Proposition}
\newtheorem{ex}[thm]{Example}
\newtheorem{rmk}[thm]{Remark}
\newtheorem{defi}[thm]{Definition}
\newcommand {\emptycomment}[1]{}
\newcommand{\be }{\begin{equation}}
\newcommand{\ee }{\end{equation}}
\newcommand{\g}{\frkg}
\newcommand{\huaA}{\mathcal{A}}
\newcommand{\huaV}{\mathcal{V}}
\newcommand{\huaO}{\mathcal{O}}
\newcommand{\frkg}{\mathfrak g}
\newcommand{\br}[1]{   [ \cdot,    \cdot  ]_\frkg   }
\newcommand{\dM}{\mathrm{d}}
\newcommand{\Hom}{\mathrm{Hom}}
\newcommand{\Der}{\mathrm{Der}}
\newcommand{\Ad}{\mathrm{Ad}}
\newcommand{\gl}{\mathfrak {gl}}
\newcommand{\idd}{\mathrm{id}}
\newcommand{\ad}{\mathrm{ad}}
\newcommand{\AD}{\mathfrak{ad}}
\begin{document}
\title{
{Purely Hom-Lie bialgebras
\thanks
 {
Research supported by NSFC (11471139) and NSF of Jilin Province (20140520054JH,20170101050JC).
 }
} }
\author{Liqiang Cai$^1$ and Yunhe Sheng$^{2,3}$  \\
$^1$School of Mathematics and Statistics, Henan University,\\
Kaifeng 475000, Henan, China\\
$^2$Department of Mathematics, Jilin University,\\
 Changchun 130012, Jilin, China\\
$^3$Department of Mathematics, Xinyang Normal University,\\ \vspace{2mm} Xinyang 464000, Henan, China
\\\vspace{3mm}
Email: cailq13@mails.jlu.edu.cn,~shengyh@jlu.edu.cn }

\date{}
\footnotetext{{\it{Keyword}:  Hom-Lie algebras,  Manin triples, purely Hom-Lie bialgebras, classical Hom-Yang-Baxter equations }}

\footnotetext{{\it{MSC}}: 16T25, 17B62, 17B99.}

\maketitle
\begin{abstract}
In this paper, first we show that there is a Hom-Lie algebra structure on the set of $(\sigma,\sigma)$-derivations of an associative algebra.  Then we construct the dual representation of a representation of a Hom-Lie algebra. We introduce the notions of a Manin triple for Hom-Lie algebras and a purely Hom-Lie bialgebra. Using the coadjoint representation, we show that there is a one-to-one correspondence between Manin triples for Hom-Lie algebras and purely Hom-Lie bialgebras. Finally, we study coboundary purely Hom-Lie bialgebras and construct solutions of the classical Hom-Yang-Baxter equations in some special Hom-Lie algebras using Hom-$\mathcal O$-operators.
\end{abstract}

\section{Introduction}

In the study of $\sigma$-derivations of an associative algebra, Hartwig, Larsson
and Silvestrov introduced the notion of a Hom-Lie algebra in \cite{HLS}. Some $q$-deformations of the Witt and the Virasoro algebras have the
structure of a Hom-Lie algebra.
 By  Hartwig, Larsson
and Silvestrov's definition, a Hom-Lie algebra $(L,[\cdot,\cdot]_L,\varsigma)$ is a nonassociative algebra $(L,[\cdot,\cdot]_L)$ together with an algebra homomorphism $\varsigma:L\longrightarrow L$ such that $[\cdot,\cdot]_L$ is skew-symmetric and the following equality holds:
\begin{equation}\label{eq:originalHJ}
  [(\idd+\varsigma)(x),[y,z]_L]_L+[(\idd+\varsigma)(y),[z,x]_L]_L+[(\idd+\varsigma)(z),[x,y]_L]_L=0,\quad \forall x,y,z\in L.
\end{equation}
Because of their close relation
to discrete and deformed vector fields and differential calculus
\cite{HLS,LD1,LD2}, Hartwig, Larsson
and Silvestrov's  Hom-Lie algebras were widely studied. Recently,  Elchinger,  Lundengard,  Makhlouf and  Silvestrov extend the result in \cite{HLS} to the case of $(\sigma,\tau)$-derivations, see \cite{ELMS} for more details.

Then in \cite{MS2}, Makhlouf and Silvestrov modified the definition of the Hom-Lie algebra. In Makhlouf and Silvestrov's new definition, a Hom-Lie algebra $(\g,[\cdot,\cdot]_\g,\phi)$ is a nonassociative algebra $(\g,[\cdot,\cdot]_\g)$ together with an algebra homomorphism $\phi:\g\longrightarrow \g$ such that $[\cdot,\cdot]_\g$ is skew-symmetric and the following equality holds:
\begin{equation}\label{eq:originalHJ}
  [\phi(x),[y,z]_\g]_\g+[\phi(y),[z,x]_\g]_\g+[\phi(z),[x,y]_\g]_\g=0,\quad \forall x,y,z\in \g.
\end{equation}
If the underlying nonassociative algebra is a Lie algebra, the above two definitions are equivalent. Some works have been done under this condition (e.g. \cite{Jin1,Jin2}). However, in general the above two definitions are not equivalent: in Hartwig, Larsson
and Silvestrov's Hom-Lie algebra, if we set $\phi=\idd+\varsigma$, then $\phi$ is not an algebra homomorphism. On the other hand, in Hartwig, Larsson
and Silvestrov's original study, the underlying nonassociative algebra is not a Lie algebra. Even though many important works have been done for Makhlouf
and Silvestrov's Hom-Lie algebra (e.g. \cite{AEM,BM,CIP,MS1,MS2,Makhlouf-Hopf,sheng3,Yao1,Yao2}), there are still lack of concrete examples of such an algebraic structure.

In this paper, we give an example of Makhlouf
and Silvestrov's Hom-Lie algebra. We show that on the set of $(\sigma,\sigma)$-derivations of an associative algebra, there is a natural Hom-Lie algebra structure in the sense of Makhlouf
and Silvestrov. In the sequel, we only consider  Makhlouf
and Silvestrov's Hom-Lie algebra.

The other purpose of this paper is to study the bialgebra theory for Hom-Lie algebras. A bialgebra is a pair $(\g,\g^*)$ such that there are same algebraic structures on $\g$ and $\g^*$ and some compatibility conditions are satisfied.
One of the most interesting bialgebra is a Lie bialgebra (\cite{D}), which contains a Lie algebra $(\g,[\cdot,\cdot])$ and Lie coalgebra $(\g,\Delta)$ such that a compatibility condition between the Lie bracket $[\cdot,\cdot]:\wedge^2\g\longrightarrow \g$ and Lie cobracket $\Delta:\g\longrightarrow\wedge^2\g$  is satisfied. One compatibility condition is that $\Delta$ is a 1-cocycle on $\g$ with the coefficients in $\wedge^2\g$. Another equivalent  compatibility condition is that $(\g\oplus\g^*;\g,\g^*)$ is a Manin triple for Lie algebras. There are already two approaches to study Hom-Lie bialgebras. One is given in \cite{Yao3}, where the compatibility condition is that the Hom-Lie cobracket $\Delta$ is a 1-cocycle on the Hom-Lie algebras $\g$ with the coefficients in $\wedge^2\g$. However, different from the case of Lie bialgebras, there is not a natural Hom-Lie algebra structure on $\g\oplus \g^*$ such that $(\g\oplus \g^*;\g,\g^*)$ is a Manin triple for Hom-Lie algebras. To solve this defect, the authors gave another approach in \cite{sheng1}. However, a quite strong condition needs to be added so that there is a coadjoint representation of $\g$ on $\g^*$.

 To give the correct notion of a Hom-Lie bialgebra, we need to have the correct dual representation first. As pointed in \cite{BM}, for a representation $\rho$ of a Hom-Lie algebra $(\g,[\cdot,\cdot]_\g,\phi)$ on $V$ with respect to $\beta\in\gl(V)$, $\rho^*$ is not a representation of $\g$ on $V^*$ anymore. To solve this problem, we introduce $\rho^\star:\g\longrightarrow\gl(V^*)$, which turns to be a representation of $\g$ on $V^*$ with respect to $(\beta^{-1})^*$. We explain that $\rho^\star$ is actually the right dual representation in the Hom-Lie algebra context (See Remark \ref{rmk:dualrep}). This is the main tool in our study. Then we modify the invariant condition of a symmetric bilinear form on a Hom-Lie algebra. In \cite{BM}, a symmetric bilinear form $B$ on a Hom-Lie algebra $(\g,[\cdot,\cdot]_\g,\phi)$ is called invariant if $B(\phi(x),y)=B(x,\phi(y))$. We modify this condition to be $B(\phi(x),\phi(y))=B(x,y)$. See Remark \ref{rmk:B} for the explanation. This is the main ingredient to give a new type of Hom-Lie bialgebras. With these preparations, we give the notions of a Manin triple for Hom-Lie algebras and a purely Hom-Lie bialgebra. We also study triangular purely Hom-Lie bialgebras in details. Our purely Hom-Lie bialgebra enjoys all the properties that a Lie bialgebra has.

The paper is organized as follows. In Section 2, we show that on the set of $(\sigma,\sigma)$-derivations of an associative algebra, there is a natural Hom-Lie algebra structure (Theorem \ref{derhomliealg}) and we also give some examples. In Section 3, associated to a representation  $\rho$ of a Hom-Lie algebra $(\g,[\cdot,\cdot]_\g,\phi)$ on $V$ with respect to  $\beta\in GL(V)$, we construct a representation  $\rho^\star$ of  $\g$ on $V^*$ with respect to  $(\beta^{-1})^*\in GL(V^*)$,
which is called the dual representation (Lemma \ref{lem:dualrep}). In Section 4, first we give the notion of a quadratic Hom-Lie algebra. Then we give the notions of a Manin triple for Hom-Lie algebras and a purely Hom-Lie bialgebra. We show that there is a one-to-one correspondence between Manin triples for Hom-Lie algebras and  purely Hom-Lie bialgebras (Theorem \ref{thm:equivalent}). In Section 5, we study coboundary purely Hom-Lie bialgebras and introduce the notion of the classical Hom-Yang-Baxter equation. We construct solutions of the classical Hom-Yang-Baxter equation in some semidirect product Hom-Lie algebra using Hom-$\mathcal O$-operators (Theorem \ref{thm:HYB}).

\section*{Acknowledgements}
We give our warmest thanks to the referees for very helpful comments that improve the paper.

\section{Hom-Lie algebra structures on the set of $(\sigma,\sigma)$-derivations}

In this section, we construct a Hom-Lie algebra on the set of $(\sigma,\sigma)$-derivations of an associative algebra.

A Hom-Lie algebra $(\g,[\cdot,\cdot]_\g,\phi)$ is called a {\bf regular Hom-Lie algebra} if $\phi$ is
an algebra automorphism. Throughout the paper, all Hom-Lie algebras are regular. Let $V$ be a vector space, for any $\beta\in GL(V)$, define a bracket $[\cdot,\cdot]_\beta$ on $\gl(V)$ by
$$[A,B]_\beta=\beta A\beta^{-1}B\beta^{-1}-\beta B\beta^{-1}A\beta^{-1},\quad \forall A,B\in\gl(V).$$
Then $(\gl(V),[\cdot,\cdot]_\beta,\Ad_\beta)$ is a regular Hom-Lie algebra, where $\Ad_\beta(A)=\beta A\beta^{-1}$ for all $A\in\gl(V)$. This regular Hom-Lie algebra plays important roles in the representation theory of Hom-Lie algebras. See \cite{SX} for more details.

Let $\huaA$ be an associative algebra over a field $\mathbb K$, and $\sigma$ and $\tau$ denote two algebra endomorphisms on $\huaA$. A {\bf $(\sigma,\tau)$-derivation} on $\huaA$ is a linear map $D:\huaA\longrightarrow \huaA$ such that
\begin{equation}
  D(ab)=D(a)\tau(b)+\sigma(a)D(b),\quad \forall~a,b\in\huaA.
\end{equation}
 The set of all $(\sigma,\tau)$-derivations on $\huaA$ is denoted by $\Der_{\sigma,\tau}(\huaA)$. See \cite{ELMS} for examples of $(\sigma,\tau)$-derivations. 
 Let $\sigma$ be an algebra isomorphism of $\huaA$. Define a skew-symmetric bilinear map $[\cdot,\cdot]_\sigma:\wedge^2\Der_{\sigma,\sigma}(\huaA)\longrightarrow \Der_{\sigma,\sigma}(\huaA)$ by
\begin{equation}\label{dd}
  [D_1,D_2]_\sigma=\sigma\circ D_1\circ \sigma^{-1}\circ D_2\circ \sigma^{-1}-\sigma\circ D_2\circ \sigma^{-1}\circ D_1\circ \sigma^{-1},\quad \forall D_1,D_2\in\Der_{\sigma,\sigma}(\huaA).
\end{equation}

The following lemma ensures that the operation $[\cdot,\cdot]_\sigma$ is well-defined.
\begin{lem}
For all $D_1,D_2\in\Der_{\sigma,\sigma}(\huaA)$, we have $[D_1,D_2]_\sigma\in \Der_{\sigma,\sigma}(\huaA).$
\end{lem}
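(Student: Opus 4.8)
The plan is a direct verification of the $(\sigma,\sigma)$-Leibniz rule, mimicking the classical proof that the commutator of two derivations is again a derivation. As a warm-up, and as a bookkeeping device, I would first record that conjugation by $\sigma$ preserves $\Der_{\sigma,\sigma}(\huaA)$: if $D\in\Der_{\sigma,\sigma}(\huaA)$ then $\widetilde D:=\sigma\circ D\circ\sigma^{-1}$ satisfies, for $a,b\in\huaA$,
\[
\widetilde D(ab)=\sigma\bigl(D(\sigma^{-1}(a)\sigma^{-1}(b))\bigr)=\sigma\bigl(D(\sigma^{-1}(a))\,b+a\,D(\sigma^{-1}(b))\bigr)=\widetilde D(a)\sigma(b)+\sigma(a)\widetilde D(b),
\]
where I used that $\sigma$ is an algebra isomorphism and $\sigma\circ\sigma^{-1}=\idd$; hence $\widetilde D\in\Der_{\sigma,\sigma}(\huaA)$.

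Next, write $F:=\sigma\circ D_1\circ\sigma^{-1}\circ D_2\circ\sigma^{-1}$, so that $[D_1,D_2]_\sigma=F-\overline F$, where $\overline F:=\sigma\circ D_2\circ\sigma^{-1}\circ D_1\circ\sigma^{-1}$ is obtained from $F$ by interchanging $D_1$ and $D_2$. I would evaluate $F(ab)$ by peeling off the five maps one at a time: $\sigma^{-1}$ distributes over the product (homomorphism), $D_2$ is expanded by its $(\sigma,\sigma)$-Leibniz rule, $\sigma^{-1}$ distributes again, $D_1$ is expanded by its $(\sigma,\sigma)$-Leibniz rule, and finally $\sigma$ distributes. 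Collecting the four resulting summands and using $\sigma\circ\sigma^{-1}=\idd$ together with the notation $\widetilde D_i=\sigma\circ D_i\circ\sigma^{-1}$, they regroup as
\[
F(ab)=F(a)\sigma(b)+\sigma(a)F(b)+\widetilde D_1(a)\widetilde D_2(b)+\widetilde D_2(a)\widetilde D_1(b).
\]
Interchanging $D_1\leftrightarrow D_2$ yields the same identity for $\overline F$, with the last two (symmetric) terms unchanged.

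Subtracting, the two "second-order" contributions cancel identically, leaving
\[
[D_1,D_2]_\sigma(ab)=[D_1,D_2]_\sigma(a)\,\sigma(b)+\sigma(a)\,[D_1,D_2]_\sigma(b).
\]
Since $[D_1,D_2]_\sigma$ is manifestly linear, this shows $[D_1,D_2]_\sigma\in\Der_{\sigma,\sigma}(\huaA)$, as required. The only point requiring care is the bookkeeping of where each $\sigma^{\pm1}$ lands when it is commuted past a product; conceptually nothing is at stake, and the cancellation of the non-Leibniz terms is forced precisely by the antisymmetrization built into the definition of $[\cdot,\cdot]_\sigma$, exactly as in the Lie algebra case.
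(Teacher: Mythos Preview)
Your proof is correct and follows essentially the same route as the paper's: both expand $[D_1,D_2]_\sigma(ab)$ step by step through the five composed maps, using that $\sigma^{\pm1}$ is multiplicative and each $D_i$ satisfies the $(\sigma,\sigma)$-Leibniz rule, and observe that the four cross terms $\widetilde D_i(a)\widetilde D_j(b)$ are symmetric in $(i,j)$ and hence cancel upon antisymmetrization. Your packaging via $F$, $\overline F$, and the notation $\widetilde D_i=\sigma D_i\sigma^{-1}$ is a bit tidier than the paper's fully written-out chain of equalities, but the computation is line-for-line the same.
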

\begin{proof}
For all $a,b\in\huaA$, by $\sigma(ab)=\sigma(a)\sigma(b)$, we have
\begin{eqnarray*}
~[D_1,D_2]_\sigma(ab)
&=&(\sigma\circ D_1\circ \sigma^{-1}\circ D_2\circ \sigma^{-1}-\sigma\circ D_2\circ \sigma^{-1}\circ D_1\circ \sigma^{-1})(ab)\\
&=&\sigma D_1\Big{(}\sigma^{-1} D_2\big{(}\sigma^{-1}(a)\sigma^{-1}(b)\big{)}\Big{)}
-\sigma D_2\Big{(}\sigma^{-1} D_1\big{(}\sigma^{-1}(a)\sigma^{-1}(b)\big{)}\Big{)}\\
&=&\sigma D_1\Big{(}\sigma^{-1} \big{(}D_2(\sigma^{-1}(a))b+aD_2(\sigma^{-1}b)\big{)}\Big{)}\\
&&-\sigma D_2\Big{(}\sigma^{-1} \big{(}D_1(\sigma^{-1}(a))b+aD_1(\sigma^{-1}b)\big{)}\Big{)}\\
&=&\sigma D_1\Big{(}\sigma^{-1}D_2\big{(}\sigma^{-1}(a)\big{)}\sigma^{-1} (b)+\sigma^{-1} (a)\sigma^{-1} D_2(\sigma^{-1}b)\Big{)}\\
&&-\sigma D_2\Big{(}\sigma^{-1}D_1\big{(}\sigma^{-1}(a)\big{)}\sigma^{-1} (b)+\sigma^{-1} (a)\sigma^{-1} D_1(\sigma^{-1}b)\Big{)}\\
&=&\sigma\Big{(}D_1\big{(}\sigma^{-1}D_2(\sigma^{-1}a)\big{)}b+D_2\big{(}\sigma^{-1}a\big{)}D_1\big{(}\sigma^{-1}b\big{)}\\
&&+D_1(\sigma^{-1}a)D_2(\sigma^{-1}b)+aD_1\big{(}\sigma^{-1}D_2(\sigma^{-1}b)\big{)}
\Big{)}\\
&&-\sigma\Big{(}D_2\big{(}\sigma^{-1}D_1(\sigma^{-1}a)\big{)}b+D_1\big{(}\sigma^{-1}a\big{)}D_2\big{(}\sigma^{-1}b\big{)}\\
&&+D_2(\sigma^{-1}a)D_1(\sigma^{-1}b)+aD_2\big{(}\sigma^{-1}D_1(\sigma^{-1}b)\big{)}
\Big{)}\\
&=&\sigma D_1\big{(}\sigma^{-1}D_2(\sigma^{-1}a)\big{)}\sigma(b)+\sigma D_2\big{(}\sigma^{-1}a\big{)}\sigma D_1\big{(}\sigma^{-1}b\big{)}\\
&&+\sigma D_1(\sigma^{-1}a)\sigma D_2(\sigma^{-1}b)+\sigma(a)\sigma D_1\big{(}\sigma^{-1}D_2(\sigma^{-1}b)\big{)}\\
&&-\sigma D_2\big{(}\sigma^{-1}D_1(\sigma^{-1}a)\big{)}\sigma(b)-\sigma D_1\big{(}\sigma^{-1}a\big{)}\sigma D_2\big{(}\sigma^{-1}b\big{)}\\
&&-\sigma D_2(\sigma^{-1}a)\sigma D_1(\sigma^{-1}b)-\sigma(a)\sigma D_2\big{(}\sigma^{-1}D_1(\sigma^{-1}b)\big{)}\\
&=&[D_1,D_2]_\sigma(a)\sigma(b)+\sigma(a)[D_1,D_2]_\sigma(b),
\end{eqnarray*}
which implies that $[D_1,D_2]_\sigma\in \Der_{\sigma,\sigma}(\huaA).$
\end{proof}

Define $\Ad_\sigma:\Der_{\sigma,\sigma}(\huaA)\rightarrow\Der_{\sigma,\sigma}(\huaA)$  by
\begin{equation}
  \Ad_\sigma(D)=\sigma\circ D\circ\sigma^{-1}.
\end{equation}

\begin{thm}\label{derhomliealg}
With the above notations,  $(\Der_{\sigma,\sigma}(\huaA),[\cdot,\cdot]_\sigma,\Ad_\sigma)$ is a Hom-Lie algebra.
\end{thm}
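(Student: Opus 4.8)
The plan is to recognize $(\Der_{\sigma,\sigma}(\huaA),[\cdot,\cdot]_\sigma,\Ad_\sigma)$ as a Hom-subalgebra of the regular Hom-Lie algebra $(\gl(\huaA),[\cdot,\cdot]_\beta,\Ad_\beta)$ with $\beta=\sigma$, where $\gl(\huaA)$ denotes the space of all $\mathbb K$-linear endomorphisms of the underlying vector space of $\huaA$, and $\sigma\in GL(\huaA)$ since it is an algebra isomorphism. Comparing definitions, for $D_1,D_2\in\Der_{\sigma,\sigma}(\huaA)\subseteq\gl(\huaA)$ the bracket $[D_1,D_2]_\sigma$ of \eqref{dd} is literally $[D_1,D_2]_\beta$, and $\Ad_\sigma(D)=\sigma\circ D\circ\sigma^{-1}$ is literally $\Ad_\beta(D)$. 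Thus $[\cdot,\cdot]_\sigma$ and $\Ad_\sigma$ are the restrictions to $\Der_{\sigma,\sigma}(\huaA)$ of $[\cdot,\cdot]_\beta$ and $\Ad_\beta$, and since $(\gl(\huaA),[\cdot,\cdot]_\beta,\Ad_\beta)$ is already a regular Hom-Lie algebra (as recalled above; see also \cite{SX}), the skew-symmetry of $[\cdot,\cdot]_\sigma$, the identity $\Ad_\sigma([D_1,D_2]_\sigma)=[\Ad_\sigma(D_1),\Ad_\sigma(D_2)]_\sigma$, and the Hom-Jacobi identity for $[\cdot,\cdot]_\sigma$ will all be inherited for free --- provided $\Der_{\sigma,\sigma}(\huaA)$ is a subspace that is stable under the bracket and under $\Ad_\sigma^{\pm1}$.

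So the verification reduces to three routine points. First, $\Der_{\sigma,\sigma}(\huaA)$ is clearly a linear subspace of $\gl(\huaA)$. Second, it is closed under $[\cdot,\cdot]_\sigma$: this is exactly the content of the Lemma just proved. Third, $\Ad_\sigma$ maps $\Der_{\sigma,\sigma}(\huaA)$ into itself, and so does $\Ad_{\sigma^{-1}}\colon D\mapsto\sigma^{-1}\circ D\circ\sigma$; indeed, using that $\sigma$ (resp. $\sigma^{-1}$) is an algebra endomorphism one computes for $a,b\in\huaA$ that $(\sigma\circ D\circ\sigma^{-1})(ab)=\sigma\big(D(\sigma^{-1}(a)\sigma^{-1}(b))\big)=\sigma\big(D(\sigma^{-1}(a))\,b+a\,D(\sigma^{-1}(b))\big)=(\Ad_\sigma D)(a)\,\sigma(b)+\sigma(a)\,(\Ad_\sigma D)(b)$, and similarly for $\sigma^{-1}$. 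Hence $\Ad_\sigma$ restricts to an invertible linear map of $\Der_{\sigma,\sigma}(\huaA)$ with inverse $\Ad_{\sigma^{-1}}$.

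Combining these three points, $(\Der_{\sigma,\sigma}(\huaA),[\cdot,\cdot]_\sigma,\Ad_\sigma)$ is a Hom-subalgebra of $(\gl(\huaA),[\cdot,\cdot]_\beta,\Ad_\beta)$, hence a Hom-Lie algebra, which is the assertion. The only step with any content is closedness under the bracket, already handled by the preceding Lemma; if one preferred a self-contained argument, one could instead expand the three cyclic terms of the Hom-Jacobi identity --- each of the form $\sigma^2\circ D_i\circ\sigma^{-1}\circ[D_j,D_k]_\sigma\circ\sigma^{-2}$ --- into six summands and check that they cancel in pairs, but this is precisely the computation that shows $(\gl(\huaA),[\cdot,\cdot]_\beta,\Ad_\beta)$ is a Hom-Lie algebra, with nothing special about derivations entering. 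The main (very mild) obstacle is simply to notice that $[\cdot,\cdot]_\sigma$ is the bracket $[\cdot,\cdot]_\beta$ in disguise; everything after that is bookkeeping.
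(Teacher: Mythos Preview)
Your argument is correct. The route differs from the paper's: the paper verifies the axioms by direct computation---it expands $\Ad_\sigma([D_1,D_2]_\sigma)$ and the cyclic sum $[[D_1,D_2]_\sigma,\Ad_\sigma(D_3)]_\sigma+c.p.$ explicitly and checks everything cancels---whereas you factor through the ambient regular Hom-Lie algebra $(\gl(\huaA),[\cdot,\cdot]_\beta,\Ad_\beta)$ with $\beta=\sigma$ (already recalled in the paper) and reduce to showing $\Der_{\sigma,\sigma}(\huaA)$ is a Hom-subalgebra. Your approach is more conceptual and, as you note yourself, the direct calculation one would otherwise do is exactly the verification that $(\gl(\huaA),[\cdot,\cdot]_\beta,\Ad_\beta)$ is a Hom-Lie algebra, with the $(\sigma,\sigma)$-derivation property playing no role there. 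A small bonus of your route is that you explicitly check $\Ad_\sigma$ (and its inverse $\Ad_{\sigma^{-1}}$) preserves $\Der_{\sigma,\sigma}(\huaA)$, something the paper's proof uses implicitly when it asserts $\Ad_\sigma$ is an automorphism of $\Der_{\sigma,\sigma}(\huaA)$ without spelling out that $\Ad_\sigma(D)$ is again a $(\sigma,\sigma)$-derivation. The paper's advantage is self-containment: it does not need to invoke the Hom-Lie structure on $\gl(V)$ from \cite{SX}.
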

\begin{proof}
For all $D_1,D_2\in\Der_{\sigma,\sigma}(\huaA)$, we have
\begin{eqnarray*}
\Ad_\sigma([D_1,D_2]_\sigma)
&=&\Ad_\sigma(\sigma\circ D_1\circ \sigma^{-1}\circ D_2\circ \sigma^{-1}
-\sigma\circ D_2\circ \sigma^{-1}\circ D_1\circ \sigma^{-1})\\
&=&\sigma^2\circ D_1\circ \sigma^{-1}\circ D_2\circ \sigma^{-2}-\sigma^2\circ D_2\circ \sigma^{-1}\circ D_1\circ \sigma^{-2}\\
&=&\sigma\circ (\sigma\circ D_1\circ\sigma^{-1})\circ \sigma^{-1}\circ(\sigma\circ D_2\circ\sigma^{-1})\circ \sigma^{-1}\\
&&-\sigma\circ (\sigma\circ D_2\circ\sigma^{-1})\circ \sigma^{-1}\circ (\sigma\circ D_1\circ\sigma^{-1})\circ \sigma^{-1}\\
&=&[\Ad_\sigma (D_1),\Ad_\sigma (D_2)]_\sigma,
\end{eqnarray*}
which implies that $\Ad_\sigma$ is an algebra isomorphism.

Furthermore, for all $D_1,D_2,D_3\in\Der_{\sigma,\sigma}(\huaA)$, we have
\begin{eqnarray*}
&&[[D_1,D_2]_\sigma,\Ad_\sigma (D_3)]_\sigma+c.p.\\
&=&[\sigma\circ D_1\circ \sigma^{-1}\circ D_2\circ \sigma^{-1}-\sigma\circ D_2\circ \sigma^{-1}\circ D_1\circ \sigma^{-1},\Ad_\sigma (D_3)]_\sigma+c.p.\\
&=&\sigma\circ (\sigma\circ D_1\circ \sigma^{-1}\circ D_2\circ \sigma^{-1}-\sigma\circ D_2\circ \sigma^{-1}\circ D_1\circ \sigma^{-1})\circ \sigma^{-1}\circ \Ad_\sigma (D_3)\circ \sigma^{-1}\\
&&-\sigma\circ \Ad_\sigma (D_3)\circ \sigma^{-1}\circ(\sigma\circ D_1\circ \sigma^{-1}\circ D_2\circ \sigma^{-1}-\sigma\circ D_2\circ \sigma^{-1}\circ D_1\circ \sigma^{-1})\circ \sigma^{-1}+c.p.\\
&=&\sigma^2\circ D_1\circ \sigma^{-1}\circ D_2\circ \sigma^{-1}\circ D_3\circ\sigma^{-2}
-\sigma^2\circ D_2\circ \sigma^{-1}\circ D_1\circ \sigma^{-1}\circ D_3\circ\sigma^{-2}\\
&&-\sigma^2\circ D_3\circ \sigma^{-1}\circ D_1\circ \sigma^{-1}\circ D_2\circ\sigma^{-2}
+\sigma^2\circ D_3\circ \sigma^{-1}\circ D_2\circ \sigma^{-1}\circ D_1\circ\sigma^{-2}+c.p.\\
&=&0,
\end{eqnarray*}
where c.p. means cyclic permutation.
Therefore,  $(\Der_{\sigma,\sigma}(\huaA),[\cdot,\cdot]_\sigma,\Ad_\sigma)$ is a Hom-Lie algebra.
\end{proof}

\begin{rmk}
There are many examples of Hom-Lie algebras that are generalizations of the one given in Theorem \ref{derhomliealg}:
  for all $m,l\in\mathbb{Z}$, $(\Der_{\sigma^m,\sigma^m}(\huaA),[\cdot,\cdot]_{ml},\Ad_{\sigma^l})$ is a Hom-Lie algebra,
where the skew-symmetric bilinear map $[\cdot,\cdot]_{ml}:\wedge^2\Der_{\sigma^m,\sigma^m}(\huaA)\longrightarrow\Der_{\sigma^m,\sigma^m}(\huaA)$ and $\Ad_{\sigma^l}$ are given by \begin{eqnarray}
[D_1,D_2]_{ml}&=&\sigma^l\circ D_1\circ\sigma^{-m}\circ D_2\circ\sigma^{-l}-\sigma^l\circ D_2\circ\sigma^{-m}\circ D_1\circ\sigma^{-l},\\
\Ad_{\sigma^l}(D)&=&\sigma^l\circ D\circ\sigma^{-l},\quad \forall D_1,D_2, D\in \Der_{\sigma,\sigma}(\huaA).
\end{eqnarray}
In particular, if $m=0,~l=0$, we recover the usual derivation Lie algebra.
\end{rmk}

\begin{ex}{\rm
  Let $M$ be a differential manifold, and $\varphi:M\longrightarrow M$ a diffeomorphism. Obviously, $\varphi^*:C^\infty(M)\longrightarrow C^\infty(M)$ is an algebra isomorphism. A section $X\in\Gamma(\varphi^*TM)$ of the pull back bundle $\varphi^*TM$ can be identified with a $(\varphi^*,\varphi^*)$-derivation on $C^\infty(M)$. Consequently, $(\Gamma(\varphi^*TM),[\cdot,\cdot]_{\varphi^*},\Ad_{\varphi^*})$ is a Hom-Lie algebra. This example plays important role in our future study on Hom-Lie algebroids.
  }
\end{ex}

\begin{ex}{\rm
Let  $\sigma:\mathbb{C}[[t]]\rightarrow\mathbb{C}[[t]]$ be an algebra isomorphism.
Then the set of $(\sigma,\sigma)$-derivations is given by
\begin{eqnarray*}
\Der_{\sigma,\sigma}(\mathbb{C}[[t]])=\{D|D(f(t))=f^\prime(s)|_{s=\sigma(t)}D_0(t), ~\text{for some}~D_0(t)\in\mathbb{C}[[t]]\}\cong\mathbb{C}[[t]],
\end{eqnarray*}
which has a basis $\{d_n|n\in\mathbb{Z}\}$, where
\begin{eqnarray*}
d_n(f(t))=f^\prime(s)|_{s=\sigma(t)}t^n.
\end{eqnarray*}
 In particular, we have
\begin{eqnarray}\label{17}
d_n(t^{k})=k\big{(}\sigma(t)\big{)}^{k-1}t^n.
\end{eqnarray}
The Hom-Lie bracket is given by
\begin{eqnarray}\label{18}
~[d_n,d_m]_\sigma(t^k)=k(m-n)\big{(}\sigma(t)\big{)}^{n+m+k-2}\Big{(}\big{(}\sigma^{-1}(t)\big{)}^\prime\Big{)}^2.
\end{eqnarray}

\begin{itemize}
\item[\rm(i)] If $\sigma(t)=qt$, for some $q\in\mathbb{C}\backslash\{0,1\}$, then
\begin{eqnarray*}
~[d_n,d_m]_\sigma&=&(m-n)q^{n+m-3}d_{n+m-1},\\
\Ad_{\sigma}(d_n)&=&q^{n-1}d_n.
\end{eqnarray*}
\item[\rm(ii)] If $\sigma(t)=qt^{-1}$, for some $q\in\mathbb{C}\backslash\{0\}$, then
\begin{eqnarray*}
~[d_n,d_m]_\sigma&=&(m-n)q^{n+m+1}d_{-n-m-3},\\
\Ad_{\sigma}(d_n)&=&-q^{n+1}d_{-n-2}.
\end{eqnarray*}
\end{itemize}
}
 \end{ex}

 \section{Dual  representations}

 In this section, we construct the dual representation of a representation of a Hom-Lie algebra without any additional condition. This is nontrivial. To our knowledge, people need to add a very strong condition to obtain a representation on the dual space in the former study. This restricts its development.

 \begin{defi} A {\bf representation} of a Hom-Lie algebra $(\g,[\cdot,\cdot]_\g,\phi)$ on
  a vector space $V$ with respect to $\beta\in\gl(V)$ is a linear map
  $\rho:\g\longrightarrow \gl(V)$, such that for all
  $x,y\in \g$, the following equalities are satisfied:
  \begin{eqnarray}
\rho(\phi(x))\circ \beta&=&\beta\circ \rho(x);\\
    \rho([x,y]_\g)\circ
    \beta&=&\rho(\phi(x))\circ\rho(y)-\rho(\phi(y))\circ\rho(x).
  \end{eqnarray}
  \end{defi}

  We denote a representation of a Hom-Lie algebra  $(\g,[\cdot,\cdot]_\g,\phi)$ by $(V,\beta,\rho)$.

  Let $(\g,[\cdot,\cdot]_\g,\phi)$ be a Hom-Lie algebra. The linear map $\phi:\g\longrightarrow\g$ can be extended to a linear map from $\wedge^k\g\longrightarrow\wedge^k\g$, for which we use the same notation $\phi$ via
$$
\phi(x_1\wedge\cdots\wedge x_k)=\phi(x_1)\wedge\cdots\wedge\phi(x_k).
$$  Furthermore, the bracket operation $[\cdot,\cdot]_\g$ can also be extended to $\wedge^\bullet\frkg$
via
\begin{equation}\label{eq:HG}
[x_1\wedge\cdots \wedge x_m,y_1\wedge \cdots
\wedge y_n]_\g=\sum_{i,j}(-1)^{i+j}[x_i,y_j]_\g \wedge
\phi(x_1\wedge\cdots \widehat{x_i}\cdots \wedge
x_m\wedge y_1\wedge\cdots
\widehat{y_j}\cdots\wedge y_n),
\end{equation}
for all $x_1\wedge\cdots \wedge x_m\in\wedge^m \g,~y_1\wedge \cdots
\wedge y_n\in\wedge^n \g$. Consequently, $(\oplus_k \wedge^k\g,\wedge,[\cdot,\cdot]_\g,\phi)$ is a Hom-Gerstenhaber algebra introduced in \cite{LGT}.

  \begin{ex}\label{ex:ad}{\rm
Let $(\g,[\cdot,\cdot]_\g,\phi)$ be a Hom-Lie algebra. For any integer $s$, the $\phi^s$-adjoint representation of $\g$ on $\wedge^k\g$, which we denote by $\ad^s$, is defined by
$$\ad^s_xY=[\phi^s(x),Y]_\g,\quad\forall x\in \g,~Y\in\wedge^k\g.$$
In particular, we write $\ad^0$ simply by $\ad$.
}
\end{ex}

Given a representation $\rho$ of the Hom-Lie algebra $(\g,[\cdot,\cdot]_\g,\phi)$ on $V$ with respect to $\beta$, define
 $\dM_\rho:\Hom(\wedge^k\g,V)\longrightarrow
\Hom(\wedge^{k+1}\g,V)$  by
\begin{eqnarray}
  &&\dM_\rho f(x_1,\cdots,x_{k+1})=\sum_{i=1}^{k+1}(-1)^{i+1}\rho(x_i)\big{(}f(\phi^{-1}(x_1),\cdots,\widehat{\phi^{-1}(x_i)},\cdots,\phi^{-1}(x_{k+1}))\big{)}\nonumber\\
  &&\qquad+\sum_{i<j}(-1)^{i+j}\beta f([\phi^{-2}(x_i),\phi^{-2}(x_j)]_\g,\phi^{-1}(x_1)\cdots,\widehat{\phi^{-1}(x_i)},\cdots,\widehat{\phi^{-1}(x_j)},\cdots,\phi^{-1}(x_{k+1})).\nonumber
\end{eqnarray}
Then we have $\dM^2_\rho=0.$ See \cite{caisheng} for more details. We use $H^\bullet(\g,\rho)$ to denote the corresponding cohomology group.

\begin{ex}\label{ex:derivation}{\rm
  Consider the $\phi^{-2}$-adjoint representation of $\g$ on $\wedge^2\g$. A linear map $\theta:\g\longrightarrow\wedge^2\g$ satisfying $\theta\circ \phi=\phi\circ\theta$ is a $1$-cocycle, i.e. $\dM_{\ad^{-2}}\theta=0$, if
  $$
  [\phi^{-2}(x),\theta\big{(}\phi^{-1}(y)\big{)}]_\g- [\phi^{-2}(y),\theta\big{(}\phi^{-1}(x)\big{)}]_\g- \theta([\phi^{-1}(x),\phi^{-1}(y)]_\g)=0.
  $$
  Equivalently, $\theta$ is a $(\phi^{-1},\phi^{-1})$-derivation.
  }
\end{ex}

Let $(V,\beta,\rho)$ be a representation of a Hom-Lie algebra $(\g,[\cdot,\cdot]_\g,\phi)$. In the sequel, we always assume that $\beta$ is invertible. Define $\rho^*:\g\longrightarrow\gl(V^*)$ as usual by
$$\langle \rho^*(x)(\xi),u\rangle=-\langle\xi,\rho(x)(u)\rangle,\quad\forall x\in\g,u\in V,\xi\in V^*.$$
However, in general $\rho^*$ is not a representation of $\g$ anymore (see \cite{BM} for details). Define $\rho^\star:\g\longrightarrow\gl(V^*)$ by
\begin{equation}\label{eq:new1}
 \rho^\star(x)(\xi):=\rho^*(\phi(x))\big{(}(\beta^{-2})^*(\xi)\big{)},\quad\forall x\in\g,\xi\in V^*.
\end{equation}
More precisely, we have
\begin{eqnarray}\label{eq:new1gen}
\langle\rho^\star(x)(\xi),u\rangle=-\langle\xi,\rho(\phi^{-1}(x))(\beta^{-2}(u))\rangle,\quad\forall x\in\g, u\in V, \xi\in V^*.
\end{eqnarray}
\begin{lem}\label{lem:dualrep}
 Let $(V,\beta,\rho)$ be a representation of a Hom-Lie algebra $(\g,[\cdot,\cdot]_\g,\phi)$. Then $\rho^\star:\g\longrightarrow\gl(V^*)$ defined above by \eqref{eq:new1} is a representation of $(\g,[\cdot,\cdot]_\g,\phi)$ on $V^*$ with respect to $(\beta^{-1})^*$.
\end{lem}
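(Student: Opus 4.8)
The goal is to verify the two axioms of a representation for $\rho^\star$ with respect to $(\beta^{-1})^*$, namely
\begin{eqnarray*}
&&\rho^\star(\phi(x))\circ(\beta^{-1})^* = (\beta^{-1})^*\circ\rho^\star(x),\\
&&\rho^\star([x,y]_\g)\circ(\beta^{-1})^* = \rho^\star(\phi(x))\circ\rho^\star(y) - \rho^\star(\phi(y))\circ\rho^\star(x),
\end{eqnarray*}
and everything will be done by pairing against an arbitrary $u\in V$ and $\xi\in V^*$ and pushing all operators onto the $V$-side, where we may use the known axioms for $\rho$. The one computational fact I will use repeatedly is that dualization is an anti-homomorphism, $(AB)^* = B^*A^*$, together with $(\phi^{-1})^*$ and $(\beta^{-1})^*$ commuting appropriately, and that $\phi$ is invertible (all Hom-Lie algebras here are regular), so $\phi^{-1}$ makes sense; similarly $\beta$ is assumed invertible.

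First I would dispatch the first axiom. Using \eqref{eq:new1gen}, $\langle\rho^\star(\phi(x))((\beta^{-1})^*\xi),u\rangle = -\langle(\beta^{-1})^*\xi,\rho(x)(\beta^{-2}u)\rangle = -\langle\xi,\rho(x)(\beta^{-1}(\beta^{-2}u))\rangle$ after moving $(\beta^{-1})^*$ across the pairing; on the other side $\langle(\beta^{-1})^*\rho^\star(x)\xi,u\rangle = \langle\rho^\star(x)\xi,\beta^{-1}u\rangle = -\langle\xi,\rho(\phi^{-1}(x))(\beta^{-2}\beta^{-1}u)\rangle$, so it remains to check $\rho(x)\circ\beta^{-1} = \rho(\phi^{-1}(x))\circ\beta^{-1}$ on $\beta^{-2}u$ — but this is exactly the first representation axiom for $\rho$ (applied to $\phi^{-1}(x)$ in place of $x$) reading $\rho(x)\circ\beta = \beta\circ\rho(\phi^{-1}(x))$, hence $\rho(\phi^{-1}(x)) = \beta^{-1}\rho(x)\beta$ and the two expressions agree. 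This is short and mechanical.

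The substantive step is the second axiom. I would expand both sides by pairing with $u$. The left side becomes $-\langle\xi,\rho(\phi^{-1}([x,y]_\g))(\beta^{-3}u)\rangle = -\langle\xi,\rho([\phi^{-1}(x),\phi^{-1}(y)]_\g)(\beta^{-3}u)\rangle$ since $\phi$ is an algebra morphism. For the right side, I would carefully unwind the composition: $\rho^\star(\phi(x))\rho^\star(y)$ applied to $\xi$ and paired with $u$ yields, by two applications of \eqref{eq:new1gen}, something of the form $\langle\xi,\rho(\phi^{-1}(y))\beta^{-2}\rho(x)\beta^{-2}u\rangle$ — the key being to track that the inner $\rho^\star(y)\xi$ carries a $\rho(\phi^{-1}(y))\circ\beta^{-2}$, which when fed into the outer $\rho^\star(\phi(x))$ picks up a $\rho(x)\circ\beta^{-2}$ acting on $u$ first; then antisymmetrize in $x,y$. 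So I must show
\[
\rho([\phi^{-1}(x),\phi^{-1}(y)]_\g)\circ\beta^{-3} = \rho(\phi^{-1}(y))\circ\beta^{-2}\circ\rho(x)\circ\beta^{-2} - \rho(\phi^{-1}(x))\circ\beta^{-2}\circ\rho(y)\circ\beta^{-2}.
\]
Using the first axiom of $\rho$ to rewrite $\beta^{-2}\rho(x) = \rho(\phi^{-2}(x))\beta^{-2}$ and $\rho(\phi^{-1}(y))\beta^{-2} = \beta^{-2}\rho(\phi(y))$ repeatedly, the right side collapses to $\beta^{-3}(\rho(\phi^{-1}(x))\rho(\phi^{-1}(y)) - \rho(\phi^{-1}(y))\rho(\phi^{-1}(x)))\beta^{-2}\cdot\beta^{?}$ — I will need to be scrupulous about exponents of $\beta$ here — and then the second axiom of $\rho$, applied to the pair $(\phi^{-1}(x),\phi^{-1}(y))$, namely $\rho([\phi^{-1}(x),\phi^{-1}(y)]_\g)\circ\beta = \rho(x)\circ\rho(\phi^{-1}(y)) - \rho(y)\circ\rho(\phi^{-1}(x))$, closes the identity. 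The main obstacle is purely bookkeeping: getting the powers of $\beta$ and the arguments $\phi^{-k}$ lined up correctly, since a single sign or exponent slip collapses the whole computation; there is no conceptual difficulty once the definitions are unwound and the two $\rho$-axioms are invoked in the right order.
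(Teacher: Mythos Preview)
Your approach is the paper's: pair against an arbitrary $u\in V$, push all operators to the $V$-side, and close with the two representation axioms for $\rho$. The paper does the mirror computation on the $V^*$-side using $\rho^*$ and powers of $(\beta^{-1})^*$, but the content is identical.

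Two concrete bookkeeping slips in your second-axiom sketch, exactly of the kind you anticipate. First, when you move $(\beta^{-1})^*$ across the pairing on the left-hand side, the resulting $\beta^{-1}$ lands \emph{outside} $\rho(\phi^{-1}([x,y]_\g))$, not inside: you get $-\langle\xi,\beta^{-1}\rho([\phi^{-1}(x),\phi^{-1}(y)]_\g)(\beta^{-2}u)\rangle$, which after one use of the first axiom is $-\langle\xi,\rho([\phi^{-2}(x),\phi^{-2}(y)]_\g)(\beta^{-3}u)\rangle$. Second, your right-hand side carries the wrong overall sign: two applications of \eqref{eq:new1gen} give two minus signs, so $\langle\rho^\star(\phi(x))\rho^\star(y)\xi,u\rangle=+\langle\xi,\rho(\phi^{-1}(y))\beta^{-2}\rho(x)\beta^{-2}u\rangle$. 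After antisymmetrizing, the identity you actually need is
\[
\rho([\phi^{-2}(x),\phi^{-2}(y)]_\g)\circ\beta^{-3}=\rho(\phi^{-1}(x))\circ\beta^{-2}\circ\rho(y)\circ\beta^{-2}-\rho(\phi^{-1}(y))\circ\beta^{-2}\circ\rho(x)\circ\beta^{-2},
\]
and this follows immediately from $\beta^{-2}\rho(y)=\rho(\phi^{-2}(y))\beta^{-2}$ together with the second axiom applied to the pair $(\phi^{-2}(x),\phi^{-2}(y))$. With these corrections your plan goes through verbatim.
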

\begin{proof}
For all $x\in\g,\xi\in V^*$,  we have
\begin{eqnarray*}
\rho^\star(\phi(x))((\beta^{-1})^*(\xi))=\rho^*(\phi^{2}(x))(\beta^{-3})^*(\xi)=(\beta^{-1})^*(\rho^*(\phi(x))(\beta^{-2})^*(\xi))=(\beta^{-1})^*(\rho^\star(x)(\xi)),
\end{eqnarray*}
which implies $\rho^\star\big{(}\phi(x)\big{)}\circ(\beta^{-1})^*=(\beta^{-1})^*\circ\rho^\star(x)$.

On the other hand, by the Hom-Jacobi identity, for all $x,y\in\g,\xi\in V^*$ and $u\in V$, we have
\begin{eqnarray*}
&&\langle\rho^\star([x,y]_\g)((\beta^{-1})^*(\xi)),u\rangle\\
&=&\langle\rho^*(\phi ([x,y]_\g))((\beta^{-3})^*(\xi)),u\rangle\\
&=&-\langle(\beta^{-3})^*(\xi),\rho(\phi ([x,y]_\g))(u)\rangle\\
&=&-\langle(\beta^{-3})^*(\xi),\rho(\phi^{2}(x))(\rho(\phi (y))(\beta^{-1}(u))) -\rho(\phi^{2}(y))(\rho(\phi(x))(\beta^{-1}(u)))\rangle\\
&=&-\langle(\beta^{-4})^*(\xi),\rho(\phi^{3}(x))(\rho(\phi^{2}(y))(u))-\rho(\phi^{3}(y))(\rho(\phi^{2}(x))(u))\rangle\\
&=&-\langle\rho^*(\phi^{2}(y))(\rho^*(\phi^{3}(x))((\beta^{-4})^*(\xi)))-\rho^*(\phi^{2}(x))(\rho^*(\phi^{3}(y))((\beta^{-4})^*(\xi))),u\rangle\\
&=&-\langle\rho^\star(\phi (y))\big{(}\rho^\star(x)(\xi)\big{)}-\rho^\star(\phi (x))\big{(}\rho^\star(y)(\xi)\big{)},u\rangle,
\end{eqnarray*}
which implies that $$\rho^\star([x,y]_\g)\circ(\beta^{-1})^*=\rho^\star\big{(}\phi(x)\big{)}\circ\rho^\star(y)-\rho^\star\big{(}\phi(y)\big{)}\circ\rho^\star(x).$$
Therefore, $\rho^\star$ is a representation of  $(\g,[\cdot,\cdot]_\g,\phi)$ on $V^*$ with respect to $(\beta^{-1})^*$.
\end{proof}

\begin{rmk}\label{rmk:dualrep}
  The representation $\rho^\star$ given by \eqref{eq:new1} is the right dual representation in the Hom-Lie algebra context. It can be obtained essentially as follows. Let $(\g,\{\cdot,\cdot\})$ be a Lie algebra and $\overline{\rho}:\g\longrightarrow\gl(V)$  a representation of the Lie algebra $(\g,\{\cdot,\cdot\})$ on $V$, and $\overline{\rho}^*:\g\longrightarrow\gl(V^*)$ the associated dual representation. Then we have two semidirect product Lie algebras $(\g\oplus V,\{\cdot,\cdot\}_{\overline{\rho}})$ and $(\g\oplus V^*,\{\cdot,\cdot\}_{\overline{\rho}^*})$. Note that $(\g\oplus V,\{\cdot,\cdot\}_{\overline{\rho}})$ can be viewed as an abelian extension of $\g$ by $V$. Let $(\phi,\beta)$ be an inducible pair (\cite{BS}), i.e. $\phi:\g\longrightarrow \g$ is a Lie algebra automorphism and $\beta\in GL(V)$ satisfy
   $$
   \beta\overline{\rho}(x)=\overline{\rho}(\phi(x))\beta,\quad \forall x\in\g.
  $$
  Then $\phi+\beta$ is an automorphism of the Lie algebra $(\g\oplus V,\{\cdot,\cdot\}_{\overline{\rho}})$. Since $\phi$ is a Lie algebra automorphism, it follows that $(\g,[\cdot,\cdot]_\phi,\phi)$ is a Hom-Lie algebra (\cite{Yao0}), where $[\cdot,\cdot]_\phi:\wedge^2\g\longrightarrow\g$ is given by $[x,y]_\phi=\phi\{x,y\}$. Similarly, we obtain a Hom-Lie algebra $(\g\oplus V,[\cdot,\cdot]_{\phi+\beta},\phi+\beta)$. Now we define $\rho:\g\longrightarrow\gl(V)$ by
  $$\rho(x)(u)=[x,u]_{\phi+\beta}=\beta\overline{\rho}(x)(u)=\overline{\rho}(\phi(x))\beta(u).$$
  It follows that $\rho$ is a representation of the Hom-Lie algebra $(\g,[\cdot,\cdot]_\phi,\phi)$ on $V$ with respect to $\beta$.

  Now using this principle, we investigate the dual representation. First we observe that we should consider $ (\beta^{-1})^*:\g^*\longrightarrow\g^*$ rather than $\beta^*$ to ensure that the following equality holds:
  $$
  (\beta^{-1})^*\overline{\rho}^*(x)=\overline{\rho}^*(\phi(x)) (\beta^{-1})^*.
  $$
  Then $\phi+(\beta^{-1})^*$ is an automorphism of the Lie algebra $(\g\oplus V^*,\{\cdot,\cdot\}_{\overline{\rho}^*})$, and we obtain a Hom-Lie algebra $(\g\oplus V^*,[\cdot,\cdot]_{\phi+(\beta^{-1})^*},\phi+(\beta^{-1})^*)$.  Now we define $\rho^\star:\g\longrightarrow\gl(V^*)$ by
  $$\rho^\star(x)(\xi)=[x,\xi]_{\phi+(\beta^{-1})^*}=(\beta^{-1})^*\overline{\rho}^*(x)(\xi)=\overline{\rho}^*(\phi(x))(\beta^{-1})^*(\xi).$$
  It follows that $\rho^\star$ is a representation of the Hom-Lie algebra $(\g,[\cdot,\cdot]_\phi,\phi)$ on $V^*$ with respect to $(\beta^{-1})^*$.

  Finally, one can deduce that the relation between $\rho$ and $\rho^\star$ is exactly given by \eqref{eq:new1gen}. This justifies that $\rho^\star$ is the right dual representation in the Hom-Lie algebra context.
\end{rmk}

  \begin{rmk}\label{rhostar}
  In Example \ref{ex:ad}, we have seen that there are a series of adjoint representation. Parallel to this result, for any $s\in \mathbb Z$, we can
  define $ \rho_s^\star:\g\longrightarrow\gl(V^*)$ by  $$ \rho_s^\star(x)(\xi)=\rho^*(\phi^s(x))\big{(}(\beta^{-2})^*(\xi)\big{)}.$$  Then $\rho_s^\star$ is also a representation of $(\g,[\cdot,\cdot]_\g,\phi)$ on $V^*$ with respect to $(\beta^{-1})^*$.
  \end{rmk}

\begin{lem}\label{lem:dualdual}
Let $(V,\beta,\rho)$ be a representation of Hom-Lie algebra  $(\g,[\cdot,\cdot]_\g,\phi)$. Then we have $$(\rho^\star)^\star=\rho.$$
\end{lem}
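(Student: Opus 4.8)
The plan is to unwind the two applications of the $\star$-operation directly against the pairing $\langle\cdot,\cdot\rangle$ between $V$ and $V^*$, and to keep careful track of the powers of $\phi$ and $\beta$ that accumulate. Recall that if $\rho$ is a representation of $(\g,[\cdot,\cdot]_\g,\phi)$ on $V$ with respect to $\beta$, then by Lemma \ref{lem:dualrep}, $\rho^\star$ is a representation on $V^*$ with respect to $\beta_1:=(\beta^{-1})^*$, and hence $(\rho^\star)^\star$ is a representation on $V^{**}\cong V$ with respect to $(\beta_1^{-1})^*=((\beta^*)^{-1})^*=\beta$ (using $(\psi^*)^*=\psi$ under the canonical identification $V^{**}\cong V$). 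So the two sides of the claimed identity are at least representations with respect to the same twist map $\beta$, which is a necessary consistency check.

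The main computation: for $x\in\g$, $u\in V$, and $\xi\in V^*$, I would write out $\langle(\rho^\star)^\star(x)(u),\xi\rangle$ using formula \eqref{eq:new1gen} applied to the representation $\rho^\star$ (with twist $\beta_1=(\beta^{-1})^*$ in place of $\beta$). This gives
\[
\langle(\rho^\star)^\star(x)(u),\xi\rangle=-\langle u,\rho^\star(\phi^{-1}(x))\big(\beta_1^{-2}(\xi)\big)\rangle
=-\langle \rho^\star(\phi^{-1}(x))\big((\beta^{2})^*(\xi)\big),u\rangle,
\]
since $\beta_1^{-2}=((\beta^{-1})^*)^{-2}=(\beta^{2})^*$. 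Now apply \eqref{eq:new1gen} again, this time to $\rho^\star$ evaluated on the element $\phi^{-1}(x)$, against the functional $(\beta^2)^*(\xi)$ and the vector $u$:
\[
-\langle \rho^\star(\phi^{-1}(x))\big((\beta^{2})^*(\xi)\big),u\rangle
=\langle (\beta^{2})^*(\xi),\,\rho(\phi^{-1}(\phi^{-1}(x)))(\beta^{-2}(u))\rangle
=\langle \xi,\,\beta^{2}\rho(\phi^{-2}(x))\beta^{-2}(u)\rangle.
\]
At this point the claim reduces to the identity $\beta^{2}\rho(\phi^{-2}(x))\beta^{-2}=\rho(x)$, equivalently $\beta\rho(\phi^{-1}(x))\beta^{-1}=\rho(x)$, which is exactly the first defining equation of a representation, $\rho(\phi(x))\circ\beta=\beta\circ\rho(x)$, read with $x$ replaced by $\phi^{-1}(x)$ and rearranged (this uses that $\beta$ and $\phi$ are invertible, which is our standing assumption). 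Hence $\langle(\rho^\star)^\star(x)(u),\xi\rangle=\langle\rho(x)(u),\xi\rangle$ for all $\xi\in V^*$, so $(\rho^\star)^\star=\rho$.

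The only place requiring genuine care — the ``hard part,'' though it is really just bookkeeping — is getting the dual-map exponents right: one must use $((\beta^{-1})^*)^{-2}=(\beta^2)^*$ and the canonical identification $V^{**}\cong V$ under which $(\psi^*)^*=\psi$ and $\langle\psi^*(\xi),u\rangle=\langle\xi,\psi(u)\rangle$ becomes $\langle v,\psi^*(\xi)\rangle=\langle\psi(v),\xi\rangle$. I expect no conceptual obstacle; once the pairing is written out twice and the powers of $\beta$ are matched, the first compatibility axiom of $\rho$ closes the argument. I would present the computation as a short displayed chain of equalities in the pairing, followed by the one-line reduction to $\rho(\phi(x))\beta=\beta\rho(x)$.
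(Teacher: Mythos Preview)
Your proof is correct and follows essentially the same approach as the paper: both unwind the definition of $\rho^\star$ twice against the natural pairing and then close the argument with the first representation axiom $\rho(\phi(x))\circ\beta=\beta\circ\rho(x)$. The only cosmetic difference is that the paper starts from formula \eqref{eq:new1} (so the final reduction is $\beta^{-2}\rho(\phi^{2}(x))\beta^{2}=\rho(x)$) whereas you start from the equivalent formula \eqref{eq:new1gen} (arriving at $\beta^{2}\rho(\phi^{-2}(x))\beta^{-2}=\rho(x)$); these are the same identity.
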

\begin{proof}
For all $x\in\g,u\in V,\xi\in V^*$, we have
\begin{eqnarray*}
\langle (\rho^\star)^\star(x)(u),\xi\rangle
&=&\langle(\rho^\star)^*(\phi(x))(\beta^2(u)),\xi\rangle
=-\langle\beta^2(u),\rho^\star(\phi(x))(\xi)\rangle\\
&=&-\langle\beta^2(u),\rho^*(\phi^2(x))((\beta^{-2})^*(\xi))\rangle
=\langle\rho(\phi^2(x))(\beta^2(u)),(\beta^{-2})^*(\xi)\rangle\\
&=&\langle\rho(x)(u),\xi\rangle,
\end{eqnarray*}
which implies that $(\rho^\star)^\star=\rho$.
\end{proof}

\begin{cor}\label{lem:rep}
 Let $(\g,[\cdot,\cdot]_\g,\phi)$ be a Hom-Lie algebra. Then $\ad^\star:\g\longrightarrow\gl(\g^*)$  defined by
 \begin{equation}
  \ad^\star_x\xi=\ad^*_{\phi(x)}(\phi^{-2})^*(\xi),\quad \forall x\in\g, \xi\in\g^*,
 \end{equation}
 is a representation of the Hom-Lie algebra $(\g,[\cdot,\cdot]_\g,\phi)$ on $\g^*$ with respect to $(\phi^{-1})^*$, which is called the {\bf coadjoint representation}.
\end{cor}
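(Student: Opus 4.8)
The plan is to obtain $\ad^\star$ as a direct application of Lemma \ref{lem:dualrep} to a suitably chosen representation of $\g$ on itself. First I would recall from Example \ref{ex:ad} (with $s=0$) that the adjoint map $\ad:\g\longrightarrow\gl(\g)$, given by $\ad_x y=[x,y]_\g$, is a representation of $(\g,[\cdot,\cdot]_\g,\phi)$ on $V=\g$ with respect to $\beta=\phi$; this is a standard consequence of the skew-symmetry of the bracket together with the Hom-Jacobi identity, and indeed $\phi\circ\ad_x=\ad_{\phi(x)}\circ\phi$ is exactly the statement that $\phi$ is an algebra homomorphism. Since all Hom-Lie algebras here are regular, $\phi\in GL(\g)$, so the standing invertibility hypothesis on $\beta$ in Lemma \ref{lem:dualrep} is met.

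Next I would simply feed $(V,\beta,\rho)=(\g,\phi,\ad)$ into Lemma \ref{lem:dualrep}. That lemma produces the representation $\rho^\star=\ad^\star:\g\longrightarrow\gl(\g^*)$ on $V^*=\g^*$ with respect to $(\beta^{-1})^*=(\phi^{-1})^*$, defined by \eqref{eq:new1}, namely
\begin{equation}
\ad^\star(x)(\xi)=\ad^*(\phi(x))\big{(}(\phi^{-2})^*(\xi)\big{)},\quad\forall x\in\g,\ \xi\in\g^*,
\end{equation}
which is precisely the formula in the statement once one writes $\ad^\star_x\xi$ for $\ad^\star(x)(\xi)$ and $\ad^*_{\phi(x)}$ for $\ad^*(\phi(x))$. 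Thus the two defining identities of a representation — that $\ad^\star(\phi(x))\circ(\phi^{-1})^*=(\phi^{-1})^*\circ\ad^\star(x)$ and that $\ad^\star([x,y]_\g)\circ(\phi^{-1})^*=\ad^\star(\phi(x))\circ\ad^\star(y)-\ad^\star(\phi(y))\circ\ad^\star(x)$ — follow verbatim from Lemma \ref{lem:dualrep}.

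There is essentially no obstacle here; the only point requiring a line of justification is the preliminary claim that $(\g,\phi,\ad)$ really is a representation, and even that is just Example \ref{ex:ad} specialized to $s=0$. So the proof is a two-sentence affair: verify (or cite) that $\ad$ is a representation of $\g$ on $\g$ with respect to $\phi$, then invoke Lemma \ref{lem:dualrep} to conclude that $\ad^\star$, with the stated formula, is a representation of $\g$ on $\g^*$ with respect to $(\phi^{-1})^*$.
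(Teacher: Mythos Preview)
Your proposal is correct and is exactly the intended argument: the paper states this result as a corollary with no separate proof, meaning it is obtained by applying Lemma~\ref{lem:dualrep} to the adjoint representation $(\g,\phi,\ad)$ of Example~\ref{ex:ad}, precisely as you outline.
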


Using the coadjoint representation $\ad^\star$, we can obtain a semidirect product Hom-Lie algebra structure on $\g\oplus\g^*$.
\begin{cor}
Let $(\g,[\cdot,\cdot]_\g,\phi)$ be a Hom-Lie algebra. Then there is a natural Hom-Lie algebra $(\g\oplus\g^*,[\cdot,\cdot]_s,\phi\oplus(\phi^{-1})^*)$, where the Hom-Lie bracket $[\cdot,\cdot]_s$ is given by
\begin{equation}\label{eq:semidirectproduct}
  [x+\xi,y+\eta]_s=[x,y]_\g+\ad_{x}^\star\eta-\ad_{y}^\star\xi=[x,y]_\g+\ad_{\phi (x)}^*(\phi^{-2})^*(\eta)-\ad_{\phi (y)}^*(\phi^{-2})^*(\xi),
\end{equation}
for all $x,y\in\g, \xi,\eta\in\g^*$.
\end{cor}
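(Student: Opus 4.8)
The plan is to recognize the stated structure as the semidirect product of the Hom-Lie algebra $(\g,[\cdot,\cdot]_\g,\phi)$ with the coadjoint representation $(\g^*,(\phi^{-1})^*,\ad^\star)$ furnished by Corollary \ref{lem:rep}, and then to verify the general principle that the semidirect product of a Hom-Lie algebra with an arbitrary representation is again a Hom-Lie algebra.

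First I would establish this general fact. Let $(V,\beta,\rho)$ be a representation of $(\g,[\cdot,\cdot]_\g,\phi)$ with $\beta$ invertible, and on $\g\oplus V$ define the skew-symmetric bracket $[x+u,y+v]_\rho=[x,y]_\g+\rho(x)(v)-\rho(y)(u)$ together with the linear map $\Phi=\phi\oplus\beta$. Skew-symmetry of $[\cdot,\cdot]_\rho$ is inherited from that of $[\cdot,\cdot]_\g$. To see that $\Phi$ is an algebra morphism, compare components in $\Phi[x+u,y+v]_\rho=[\Phi(x+u),\Phi(y+v)]_\rho$: the $\g$-component is the identity $\phi[x,y]_\g=[\phi(x),\phi(y)]_\g$, which holds because $\phi$ is an automorphism, and the $V$-component is $\beta(\rho(x)v)=\rho(\phi(x))(\beta v)$, which is exactly the first representation axiom $\rho(\phi(x))\circ\beta=\beta\circ\rho(x)$.

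Next I would check the Hom-Jacobi identity $[\Phi(X),[Y,Z]_\rho]_\rho+[\Phi(Y),[Z,X]_\rho]_\rho+[\Phi(Z),[X,Y]_\rho]_\rho=0$ for $X=x+u$, $Y=y+v$, $Z=z+w$. The $\g$-valued part of the left-hand side is precisely $[\phi(x),[y,z]_\g]_\g+[\phi(y),[z,x]_\g]_\g+[\phi(z),[x,y]_\g]_\g$, which vanishes by the Hom-Jacobi identity of $\g$. The $V$-valued part of the term $[\Phi(X),[Y,Z]_\rho]_\rho$ equals $\rho(\phi(x))(\rho(y)w-\rho(z)v)-\rho([y,z]_\g)(\beta u)$; rewriting $\rho([y,z]_\g)\circ\beta=\rho(\phi(y))\circ\rho(z)-\rho(\phi(z))\circ\rho(y)$ by the second representation axiom turns this into four terms of the shape $\rho(\phi(\cdot))\rho(\cdot)(\cdot)$, and summing over the cyclic permutation $(x,u)\to(y,v)\to(z,w)\to(x,u)$ makes all twelve resulting terms cancel in pairs. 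This bookkeeping is the only nontrivial computation, and I expect it to be the main — though entirely mechanical — obstacle.

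Finally I would specialize. By Corollary \ref{lem:rep}, $\ad^\star$ is a representation of $(\g,[\cdot,\cdot]_\g,\phi)$ on $\g^*$ with respect to $(\phi^{-1})^*$, and $(\phi^{-1})^*$ is invertible since $\phi$ is an algebra automorphism; hence the general construction applies with $V=\g^*$, $\beta=(\phi^{-1})^*$, $\rho=\ad^\star$, and produces a Hom-Lie algebra $(\g\oplus\g^*,[\cdot,\cdot]_s,\phi\oplus(\phi^{-1})^*)$ with $[x+\xi,y+\eta]_s=[x,y]_\g+\ad^\star_x\eta-\ad^\star_y\xi$. Substituting the defining formula $\ad^\star_x\xi=\ad^*_{\phi(x)}(\phi^{-2})^*(\xi)$ from Corollary \ref{lem:rep} then yields the second expression in \eqref{eq:semidirectproduct}, completing the proof.
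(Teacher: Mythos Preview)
Your proposal is correct and follows exactly the approach the paper intends: the corollary is stated without proof, immediately after Corollary~\ref{lem:rep}, as a direct consequence of the coadjoint representation via the standard semidirect product construction for Hom-Lie algebras. You have supplied the (routine) verification of that general construction that the paper leaves implicit.
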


  The above construction of the semidirect product Hom-Lie algebra $(\g\oplus\g^*,[\cdot,\cdot]_s,\phi\oplus(\phi^{-1})^*)$ is a very important step to build a Manin triple from a purely Hom-Lie bialgebra as shown in the next section.

 \section{Purely Hom-Lie bialgebras and Manin triples for Hom-Lie algebras}

 In this section, we give the notions of a Manin triple for Hom-Lie algebras and a purely Hom-Lie bialgebra, and show that they are equivalent. First we modify the notion of a quadratic Hom-Lie algebra.

 \begin{defi}
   Let $(\huaV,[\cdot,\cdot]_\huaV,\phi_\huaV)$ be a Hom-Lie algebra. A symmetric bilinear form $B$ on $\huaV$ is called invariant if for all $u,v,w\in\huaV$, we have
   \begin{eqnarray}
   \label{eq:inv1} B([u,v]_\huaV,\phi_\huaV(w)) &=&-B(\phi_\huaV(v),[u,w]_\huaV), \\
    \label{eq:inv2}    B(\phi_\huaV(u),\phi_\huaV(v)) &=&B( u,v).
   \end{eqnarray}
   The quadruple $(\huaV,[\cdot,\cdot]_\huaV,\phi_\huaV, B)$  is called a {\bf quadratic Hom-Lie algebra}.
 \end{defi}

\begin{ex}\label{ex:quadratic}{\rm
  Let $\{e_1,e_2,e_3,e_4\}$ be a basis of a 4-dimensional vector space $\huaV$. Define a skew-symmetric bracket operation $[\cdot,\cdot]_\huaV$ on $\huaV$ by
 \begin{eqnarray*}
  &&[e_1,e_2]_\huaV=e_2, \quad [e_1,e_3]_\huaV=e_2,\quad [e_1,e_4]_\huaV=-e_1-e_2+e_3-e_4,\\
  &&[e_2,e_3]_\huaV=0,\quad[e_2,e_4]_\huaV=e_3,\quad [e_3,e_4]_\huaV=e_3.
\end{eqnarray*}
  Then $(\g,[\cdot,\cdot]_\g,\phi_\huaV)$ is a Hom-Lie algebra in which $\phi_\huaV=\left(\begin{array}{cccc}1&1&0&0\\0&1&0&0\\
  0&0&1&0\\0&0&-1&1\end{array}\right)$. More precisely, $$\phi_\huaV(e_1)=e_1+e_2,\quad\phi_\huaV(e_2)=e_2,\quad\phi_\huaV(e_3)=e_3,\quad\phi_\huaV(e_4)=-e_3+e_4.$$ Furthermore, let $(b_{ij})=\left(\begin{array}{cccc}0&0&1&0\\0&0&0&1\\
  1&0&0&0\\0&1&0&0\end{array}\right)$ and define the symmetric bilinear form via $B(e_i,e_j)=b_{ij}$. Then $(\huaV,[\cdot,\cdot]_\huaV,\phi_\huaV,B)$ is a quadratic Hom-Lie algebra.
  }
\end{ex}

 \begin{rmk}\label{rmk:B}
   Note that the invariant condition \eqref{eq:inv2} is not the same as the one give in \cite{BM}, where the author used the condition
   \begin{equation}\label{eq:invinf}
   B(\phi_\huaV(u),v) =B( u,\phi_\huaV(v)).
   \end{equation}
   Note that the compatibility condition between $\phi_\g$ and the multiplication $[\cdot,\cdot]_\g$ is that $\phi_\g$ is an algebra automorphism. For the algebra $(\g,[\cdot,\cdot]_\g)$, the set of algebra automorphisms is a group, called the automorphism group. Thus, it is natural to require that $\phi_\g$ and the bilinear form $B$ satisfy a similar compatibility condition. One can show that the set of linear maps that preserve the bilinear form $B$ in the sense of \eqref{eq:inv2} is a group. Actually it is the orthogonal group associated to the bilinear form $B$.  However, linear maps that preserve the bilinear form $B$ in the sense of \eqref{eq:invinf} do not have such properties.
 \end{rmk}

 \begin{defi}
  A \textbf{Manin triple for Hom-Lie algebras} is a triple $(\huaV;\g,\g')$ in which $(\huaV, [\cdot,\cdot]_\huaV,\\ \phi_\huaV,  B)$ is a quadratic Hom-Lie algebra,    $(\g,[\cdot,\cdot]_{\g},\phi_{\g})$ and $(\g^\prime,[\cdot,\cdot]_{\g^\prime},\phi_{\g^\prime})$ are   isotropic Hom-Lie sub-algebras of $\huaV$ such that \begin{itemize}
    \item[\rm(i)] $\huaV=\g\oplus \g^\prime$ as vector spaces;
      \item[\rm(ii)] $\phi_\huaV=\phi_\g\oplus\phi_{\g^\prime}$.
  \end{itemize}
 \end{defi}

 \begin{ex}\label{ex:Manintriple}{\rm
Let  $(\huaV,[\cdot,\cdot]_\huaV,\phi_\huaV,B)$ be the quadratic Hom-Lie algebra given in Example \ref{ex:quadratic}. Let $(\g,[\cdot,\cdot]_{\g},\phi_{\g})$ be the Hom-Lie subalgebra given by
$$
\g=span\{e_1,e_2\},\quad [e_1,e_2]_\g=e_2,\quad \phi_\g=\left(\begin{array}{cc}1&1\\0&1\end{array}\right).
$$Let $(\g',[\cdot,\cdot]_{\g'},\phi_{\g'})$ be the Hom-Lie subalgebra given by
$$
\g'=span\{e_3,e_4\},\quad [e_3,e_4]_{\g'}=e_3,\quad \phi_{\g'}=\left(\begin{array}{cc}1&0\\-1&1\end{array}\right).
$$
Then it is straightforward to see that $(\huaV;\g,\g')$ is a Manin triple for Hom-Lie algebras.
}
 \end{ex}

 \begin{defi}
 Let $(\g,[\cdot,\cdot]_{\g},\phi)$ and $(\g^*,[\cdot,\cdot]_{\g^*},(\phi^{-1})^*)$ be two Hom-Lie algebras.  $(\g,\g^*)$ is called a {\bf purely Hom-Lie bialgebra} if the following compatibility condition holds:
 \begin{eqnarray}\label{eq:homLiebi}
 \Delta([x,y]_{\g})=\ad_{\phi^{-1}(x)}\Delta (y)-\ad_{\phi^{-1}(y)}\Delta (x),
 \end{eqnarray}
 where $\Delta:\g\longrightarrow \wedge^2\g$ is the dual of the Hom-Lie algebra structure $[\cdot,\cdot]_{\g^*}:\wedge^2\g^*\longrightarrow \g^*$ on $\g^*$, i.e.
 $$
 \langle\Delta(x),\xi\wedge\eta\rangle=\langle x,[\xi,\eta]_{\g^*}\rangle.
 $$
 \end{defi}

 \begin{ex}{\rm
 Let $\g$ and $\g'$ be the two Hom-Lie algebras given in Example \ref{ex:Manintriple}. We view $\g'$ as the dual space of $\g$ and view $\{e_3,e_4\}$ the dual basis of $\{e_1,e_2\}$. Then $(\g,\g')$ is a purely Hom-Lie bialgebra.
 }
 \end{ex}
\begin{rmk}
   We give some explanations of the compatibility condition \eqref{eq:homLiebi}. Due to the fact that $\oplus_{k=0}^\infty \wedge^k\g$ is a graded Hom-Lie algebra, \eqref{eq:homLiebi} means that $\Delta$ is a $(\phi^{-1},\phi^{-1})$-derivation. On the other hand, using the cohomology point of view,  \eqref{eq:homLiebi} also means that $\Delta $ is a $1$-cocycle on $\g$ with the coefficients in $(\wedge^2\g,\ad^{-2})$.
\end{rmk}

\begin{rmk}
Our definition of a purely Hom-Lie bialgebra is different from the Hom-Lie bialgebras given in \cite{Yao3} and \cite{sheng1} respectively. In \cite{Yao3}, a Hom-Lie bialgebra is  a quadruple $(\g,[\cdot,\cdot]_\g,\Delta,\phi)$, where $(\g,[\cdot,\cdot]_\g,\phi)$ is a Hom-Lie algebra and $(\g, \Delta,\phi)$ is a Hom-Lie coalgebra\footnote{$(\g, \Delta,\phi)$ is a Hom-Lie coalgebra if and only if $(\g^*,\Delta^*,\phi^*)$ is a Hom-Lie algebra.} such that the following compatibility condition is satisfied:
\begin{equation}\label{eq:oldcom}
\Delta([x,y]_\g)=\ad_{\phi(x)}\Delta(y)-\ad_{\phi(y)}\Delta(x),\quad\forall x,y\in\g.
\end{equation}
Note that even though one can also explain the above compatibility condition as a derivation, or a $1$-cocycle, there is not a  natural Hom-Lie algebra structure on $\g\oplus \g^*$. To solve this problem, the authors introduced another kind of Hom-Lie bialgebras based on admissible Hom-Lie algebras in \cite{sheng1}. However, the  condition for an admissible Hom-Lie algebra is quite strong, which makes such kind of Hom-Lie bialgebras too restrictive. It is obvious that the compatibility conditions \eqref{eq:homLiebi} and \eqref{eq:oldcom} are not the same. The other difference, which is more intrinsic, is that the algebra homomorphism on $\g^*$ is $(\phi^{-1})^*$ in a purely Hom-Lie bialgebra and the algebra homomorphism on $\g^*$ is $\phi^*$ in Hom-Lie bialgebras given in  \cite{Yao3} and \cite{sheng1}. By Remark \ref{rmk:dualrep}, we believe that using $(\phi^{-1})^*$ as the algebra homomorphism on $\g^*$ is more natural.
\end{rmk}
\begin{lem}\label{mudelta}
Let $(\g,[\cdot,\cdot]_{\g},\phi)$ and $(\g^*,[\cdot,\cdot]_{\g^*},(\phi^{-1})^*)$ be two Hom-Lie algebras.  Then the condition \eqref{eq:homLiebi} is equivalent to
\begin{equation}\label{ups}
  \Upsilon([\xi,\eta]_{\g^*})=\AD_{\phi^*(\xi)}\Upsilon(\eta)-\AD_{\phi^*(\eta)}\Upsilon(\xi),\quad\forall \xi,\eta\in\g^*,
\end{equation}
 where $\AD_{\xi}\eta=[\xi,\eta]_{\g^*}$ and $\Upsilon:\g^*\longrightarrow \wedge^2\g^*$ is the dual of the Hom-Lie algebra structure $[\cdot,\cdot]_\g:\wedge^2\g\longrightarrow \g$ on $\g$, i.e.
 $$
 \langle\Upsilon(\xi),x\wedge y\rangle=\langle \xi,[x,y]_\g\rangle.
 $$
\end{lem}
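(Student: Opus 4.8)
The plan is to exploit the symmetry of the situation: the two conditions \eqref{eq:homLiebi} and \eqref{ups} are formally the "same" statement, one phrased on $\g$ with structure maps coming from $\g^*$, the other phrased on $\g^*$ with structure maps coming from $\g$. So it suffices to prove one implication, say that \eqref{eq:homLiebi} implies \eqref{ups}, and then apply the symmetry (using Lemma \ref{lem:dualdual}, which guarantees $(\phi^{-1})^*$ on $\g^*$ has the property that its inverse-dual brings us back to $\phi$ on $\g$, and $\Upsilon$ plays the role for $\g$ that $\Delta$ plays for $\g^*$). Concretely, I would first record that $\langle\Delta(x),\xi\wedge\eta\rangle=\langle x,[\xi,\eta]_{\g^*}\rangle$ and $\langle\Upsilon(\xi),x\wedge y\rangle=\langle\xi,[x,y]_\g\rangle$ are two ways of encoding the same element of $\g\otimes\g^*\otimes\g^*$ (antisymmetrized appropriately), namely the mixed tensor $T$ with $\langle T, x\otimes\xi\otimes\eta\rangle$ reading off $[\xi,\eta]_{\g^*}$ against $x$; then both \eqref{eq:homLiebi} and \eqref{ups} will turn out to be the statement that a single cocycle-type expression in $\wedge^1\g\otimes\wedge^1\g\otimes\wedge^1\g^*$ vanishes.

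The key computational step is to pair \eqref{eq:homLiebi}, which lives in $\wedge^2\g$, against an arbitrary $\xi\wedge\eta\in\wedge^2\g^*$, and unravel both sides using the definitions of $\ad_{\phi^{-1}(x)}$ (the $\phi^{-2}$-adjoint action as in Corollary \ref{lem:rep} — here note $\ad_{\phi^{-1}(x)}Y=[\phi^{-1}(x),Y]_\g$ acting on $\wedge^2\g$, which via \eqref{eq:HG} produces the bracket $[\cdot,\cdot]_\g$ and the twist $\phi$), the defining relation for $\Delta$, and the definition of the coadjoint/dual representation $\ad^\star$ of $\g^*$ on $\g$ that is implicit in $\AD_{\phi^*(\xi)}$. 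The point is that $\langle\Delta([x,y]_\g),\xi\wedge\eta\rangle=\langle[x,y]_\g,[\xi,\eta]_{\g^*}\rangle$, while $\langle\ad_{\phi^{-1}(x)}\Delta(y),\xi\wedge\eta\rangle$ unpacks — after moving the $\g$-action across the pairing, which introduces $(\phi^{-1})^*$ on the $\g^*$-side exactly as in Lemma \ref{lem:dualrep}/Corollary \ref{lem:rep} — into an expression involving $\langle y,[\cdot,\cdot]_{\g^*}\rangle$ with the $\g^*$-entries acted on by the coadjoint representation of $\g$. Symmetrically, pairing \eqref{ups} with $x\wedge y$ gives $\langle[\xi,\eta]_{\g^*},[x,y]_\g\rangle$ on the left and, on the right, $\langle\AD_{\phi^*(\xi)}\Upsilon(\eta),x\wedge y\rangle$ unpacks into $\langle\eta,[\cdot,\cdot]_\g\rangle$ with $\g$-entries acted on by $\ad^\star$ of $\g^*$. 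One then checks these two unpacked identities coincide term by term — both reduce, after using the Hom-algebra automorphism conditions $\phi[\cdot,\cdot]_\g=[\phi,\phi]_\g$ and $(\phi^{-1})^*[\cdot,\cdot]_{\g^*}=[(\phi^{-1})^*,(\phi^{-1})^*]_{\g^*}$ to line up the twists, to the vanishing of the same four-term (two from each side) "mixed Jacobi" expression.

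The main obstacle I expect is purely bookkeeping: keeping track of where the powers $\phi^{\pm 1},\phi^{\pm 2}$ land when one transposes a coadjoint action across the canonical pairing. The definition $\ad_x^\star\xi=\ad^*_{\phi(x)}(\phi^{-2})^*(\xi)$ from Corollary \ref{lem:rep}, together with the corresponding formula for the coadjoint action of $\g^*$ on $\g$ (with $\phi$ replaced by $(\phi^{-1})^*$, hence $(\phi^{-1})^{*}$-twists becoming $\phi$-twists and $(\phi^{-1})^{*2}$ becoming $\phi^2$ on the $\g$-side), must be applied carefully so that the $\phi^{-1}$ in $\ad_{\phi^{-1}(x)}$ on the left of \eqref{eq:homLiebi} is exactly matched by the $\phi^*$ in $\AD_{\phi^*(\xi)}$ on the right of \eqref{ups}. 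Once the twist-counting is organized — I would do it by writing every term in the form $\langle\phi^{a}(\text{something}),\phi^{*b}(\text{something})\rangle$ and checking $a+b$ is constant across the identity, which it must be by condition \eqref{eq:inv2}-type homogeneity — the equivalence falls out, and the skew-symmetry in $x,y$ (resp. $\xi,\eta$) on both sides matches automatically because $\Delta$ and $\Upsilon$ are valued in $\wedge^2$. I would present the forward direction in full and then remark that the converse is obtained by the same computation read backwards, or equivalently by applying the forward direction to the pair $(\g^*,\g)$ and invoking Lemma \ref{lem:dualdual}.
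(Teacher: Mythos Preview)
Your proposal is correct and follows essentially the same strategy as the paper: pair \eqref{eq:homLiebi} against an arbitrary $\xi\wedge\eta\in\wedge^2\g^*$, unravel the adjoint actions using the dualization formulas, and recognize the result as the pairing of \eqref{ups} against $x\wedge y$. The paper streamlines this into a single chain of equalities---working with the naive duals $\ad^*$, $\AD^*$ and the Leibniz-type identity $\ad_x^*(\xi\wedge\eta)=\ad_x^*\xi\wedge\phi^*(\eta)+\phi^*(\xi)\wedge\ad_x^*\eta$ rather than the $\star$-representations---which establishes both implications simultaneously, so your separate symmetry/Lemma~\ref{lem:dualdual} step for the converse becomes unnecessary.
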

\begin{proof} First for all $x\in \g,\xi_1,\cdots,\xi_k\in \g^*, \Xi=\xi_1\wedge\cdots\wedge\xi_k,$ we have
 \begin{eqnarray}
\label{adphi} (\phi^{-1})^*\ad_x^*\Xi&=&\ad_{\phi(x)}^*(\phi^{-1})^*(\Xi),\\
\label{5} \ad_x^*(\xi_1\wedge\cdots\wedge\xi_k)&=&\sum_{i=1}^k\phi^*(\xi_1)\wedge\cdots\wedge\ad_x^*\xi_i\wedge\cdots\wedge\phi^*(\xi_k).
 \end{eqnarray}
Then for all $x,y\in\g,~\xi,\eta\in\g^*$, by $\langle\AD_\xi^*x,\eta\rangle=-\langle x,\AD_\xi\eta\rangle$, we have
\begin{eqnarray*}
&&\langle-\Delta([x,y]_\g)+\ad_{\phi^{-1}(x)}\Delta (y)-\ad_{\phi^{-1}(y)}\Delta (x),\xi\wedge\eta\rangle\\
&=&-\langle[x,y]_\g,[\xi,\eta]_{\g^*}\rangle-\langle\Delta (y),\ad_{\phi^{-1}(x)}^*(\xi\wedge\eta)\rangle+\langle\Delta (x),\ad_{\phi^{-1}(y)}^*(\xi\wedge\eta)\rangle\\
&=&-\langle x\wedge y,\Upsilon([\xi,\eta]_{\g^*})\rangle
-\langle\Delta (y),\ad_{\phi^{-1}(x)}^*\xi\wedge\phi^*(\eta)+\phi^*(\xi)\wedge\ad_{\phi^{-1}(x)}^*\eta\rangle\\
&&+\langle\Delta (x),\ad_{\phi^{-1}(y)}^*\xi\wedge\phi^*(\eta)+\phi^*(\xi)\wedge\ad_{\phi^{-1}(y)}^*\eta\rangle\\
&=&-\langle x\wedge y,\Upsilon([\xi,\eta]_{\g^*})\rangle
+\langle y,\AD_{\phi^*(\eta)}\ad_{\phi^{-1}(x)}^*\xi\rangle
-\langle y,\AD_{\phi^*(\xi)}\ad_{\phi^{-1}(x)}^*\eta\rangle\\
&&-\langle x,\AD_{\phi^*(\eta)}\ad_{\phi^{-1}(y)}^*\xi\rangle
+\langle x,\AD_{\phi^*(\xi)}\ad_{\phi^{-1}(y)}^*\eta\rangle\\
&=&-\langle x\wedge y,\Upsilon([\xi,\eta]_{\g^*})\rangle
+\langle\ad_{\phi^{-1}(x)}\AD_{\phi^*(\eta)}^*y,\xi\rangle
-\langle\ad_{\phi^{-1}(x)}\AD_{\phi^*(\xi)}^*y,\eta\rangle\\
&&-\langle\ad_{\phi^{-1}(y)}\AD_{\phi^*(\eta)}^*x,\xi\rangle
+\langle\ad_{\phi^{-1}(y)}\AD_{\phi^*(\xi)}^*x,\eta\rangle\\
&=&-\langle x\wedge y,\Upsilon([\xi,\eta]_{\g^*})\rangle
+\langle\AD_{\phi^*(\eta)}^*(x\wedge y),\Upsilon(\xi)\rangle
-\langle\AD_{\phi^*(\xi)}^*(x\wedge y),\Upsilon(\eta)\rangle\\
&=&\langle x\wedge y,-\Upsilon([\xi,\eta]_{\g^*})-\AD_{\phi^*(\eta)}\Upsilon(\xi)+\AD_{\phi^*(\xi)}\Upsilon(\eta)\rangle,
\end{eqnarray*}
which implies that \eqref{eq:homLiebi} and \eqref{ups} are equivalent.
\end{proof}

For two Hom-Lie algebras $(\g,[\cdot,\cdot]_\g,\phi)$ and $(\g^*,[\cdot,\cdot]_{\g^*},(\phi^{-1})^*)$, define a skew-symmetric bilinear operation $[\cdot,\cdot]_{\bowtie}:\wedge^2(\g\oplus \g^*)\longrightarrow \g\oplus \g^*$ and a symmetric bilinear form $B$ by
\begin{eqnarray}
\label{eq:bracketdouble}[x+\xi,y+\eta]_{\bowtie}
&=&\big{(}[x,y]_\g+\AD_{\xi}^\star y-\AD_{\eta}^\star x\big{)}
+\big{(}[\xi,\eta]_{\g^*}+\ad_{x}^\star\eta-\ad_{y}^\star\xi\big{)}\nonumber\\
&=&\big{(}[x,y]_\g+\AD_{(\phi^{-1})^*(\xi)}^*\phi^2(y)-\AD_{(\phi^{-1})^*(\eta)}^*\phi^2(x)\big{)}\nonumber\\
&&+\big{(}[\xi,\eta]_{\g^*}+\ad_{\phi (x)}^*(\phi^{-2})^*(\eta)-\ad_{\phi (y)}^*(\phi^{-2})^*(\xi)\big{)}
\end{eqnarray}
and
\begin{equation}\label{eq:bilinear form}
  B(x+\xi,y+\eta)=\xi(y)+\eta(x).
\end{equation}
Note that \eqref{eq:bracketdouble} can be viewed as a generalization of \eqref{eq:semidirectproduct}.

 \begin{thm}\label{thm:equivalent}
Let $(\g,\g^*)$ be a purely Hom-Lie bialgebra. Then $\Big{(}\g\oplus \g^*,[\cdot,\cdot]_{\bowtie},\phi\oplus(\phi^{-1})^*,B\Big{)}$ is a quadratic Hom-Lie algebra, where the Hom-Lie bracket $[\cdot,\cdot]_{\bowtie}$ and the bilinear form $B$ are given by \eqref{eq:bracketdouble} and \eqref{eq:bilinear form} respectively. Furthermore, $(\g\oplus \g^*;\g,\g^*)$ is a Manin triple for Hom-Lie algebras.

Conversely, let $(\g,[\cdot,\cdot]_\g,\phi_\g)$ and $(\g^*,[\cdot,\cdot]_{\g^*},\phi_{\g^*})$  be two Hom-Lie algebras and $(\g\oplus \g^*, [\cdot,\cdot]_{\g\oplus \g^*}, \phi_{\g} \\
\oplus \phi_{\g^*},B)$  a quadratic Hom-Lie algebra, where the  symmetric bilinear form $B$ is given by \eqref{eq:bilinear form}.   If $(\g\oplus \g^*;\g,\g^*)$ is a Manin triple for Hom-Lie algebras, then $(\g,\g^*)$ is a purely Hom-Lie bialgebra.
\end{thm}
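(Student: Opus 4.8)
The plan is to verify the two directions by direct checking against the defining axioms, using Lemma~\ref{mudelta} as the main bridge. For the forward direction, suppose $(\g,\g^*)$ is a purely Hom-Lie bialgebra. First I would check that $\phi\oplus(\phi^{-1})^*$ is an algebra automorphism of $[\cdot,\cdot]_{\bowtie}$: the $\g$-component reduces to $\phi$ being an automorphism of $[\cdot,\cdot]_\g$, the $\g^*$-component to $(\phi^{-1})^*$ being an automorphism of $[\cdot,\cdot]_{\g^*}$, and the cross terms reduce to the intertwining relations \eqref{adphi} together with the coadjoint-representation properties from Corollary~\ref{lem:rep} (namely that $\ad^\star$ is a representation with respect to $(\phi^{-1})^*$, and dually $\AD^\star$ with respect to $\phi^*$). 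Next I would verify the Hom-Jacobi identity for $[\cdot,\cdot]_{\bowtie}$. Decomposing $[[x+\xi,y+\eta]_{\bowtie},(\phi\oplus(\phi^{-1})^*)(z+\zeta)]_{\bowtie}+\text{c.p.}$ into $\g$-valued and $\g^*$-valued parts, the purely-$\g$ terms vanish by the Hom-Jacobi identity of $\g$, the purely-$\g^*$ terms by that of $\g^*$, the terms with one dual argument and $\g$-valued output vanish because $\ad^\star$ is a representation (and similarly with the roles swapped, using that $\AD^\star$ is a representation), and the genuinely mixed terms — those involving two elements of $\g$ and one of $\g^*$ with $\g$-valued output, and the symmetric counterpart — are precisely where the compatibility condition \eqref{eq:homLiebi} (respectively its equivalent form \eqref{ups} from Lemma~\ref{mudelta}) is consumed. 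This bookkeeping, organized by bidegree, is the technical heart of the argument; the main obstacle is keeping the $\phi$-powers straight, since \eqref{eq:bracketdouble} already carries shifts like $\phi^2$ and $(\phi^{-2})^*$, and the Hom-Jacobi identity introduces one more layer of $\phi$'s.

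Still in the forward direction, I would then check that $B$ is invariant in the sense of the new definition: condition \eqref{eq:inv2}, $B((\phi\oplus(\phi^{-1})^*)(u),(\phi\oplus(\phi^{-1})^*)(v))=B(u,v)$, is immediate from $\langle(\phi^{-1})^*\xi,\phi(y)\rangle=\langle\xi,y\rangle$; condition \eqref{eq:inv1} unwinds, again by bidegree, into the definitions of $\ad^\star$ and $\AD^\star$ as (co)adjoint representations paired against $B$, using $\langle\AD_\xi^\star y,\cdot\rangle$-type adjunction identities. That $\g$ and $\g^*$ are isotropic Hom-Lie subalgebras and that $\g\oplus\g^*=\g\oplus\g^*$ with $\phi_\huaV=\phi\oplus(\phi^{-1})^*$ are built into \eqref{eq:bracketdouble} and \eqref{eq:bilinear form}, so the Manin triple conclusion follows at once.

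For the converse, suppose $(\g\oplus\g^*;\g,\g^*)$ is a Manin triple with $B$ given by \eqref{eq:bilinear form}. Because $\g$ and $\g^*$ are isotropic subalgebras summing to the whole space, the bracket $[\cdot,\cdot]_{\g\oplus\g^*}$ is determined by its components, and I would show it must coincide with \eqref{eq:bracketdouble}: the mixed bracket $[x,\eta]$ is pinned down by invariance of $B$ (condition \eqref{eq:inv1}) together with isotropy, exactly reproducing the $\AD^\star$- and $\ad^\star$-terms; here I would also use condition \eqref{eq:inv2} to see $\phi_\g=\phi$ and $\phi_{\g^*}=(\phi^{-1})^*$ are forced, i.e.\ that the homomorphism on $\g^*$ is the inverse-dual of the one on $\g$ (this is where the new invariance axiom \eqref{eq:inv2}, rather than the one from \cite{BM}, is essential). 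Once the bracket on $\g\oplus\g^*$ is identified with $[\cdot,\cdot]_{\bowtie}$, the Hom-Jacobi identity restricted to two elements of $\g$ and one of $\g^*$ gives back exactly \eqref{eq:homLiebi} (reading off the $\g$-component) — equivalently \eqref{ups} reading off the $\g^*$-component — so $(\g,\g^*)$ is a purely Hom-Lie bialgebra. The subtle point in this direction is justifying that no extra terms can appear in the mixed bracket; this is handled by pairing an arbitrary putative bracket against $B$, using nondegeneracy of $B$ on $\g\oplus\g^*$ and isotropy of the two factors to force the coefficients.
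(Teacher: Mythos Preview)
Your proposal is correct and follows essentially the same route as the paper: organize the Hom-Jacobi identity for $[\cdot,\cdot]_{\bowtie}$ by bidegree, dispose of the $\g^*$-valued (resp.\ $\g$-valued) part of the $(2,1)$-case (resp.\ $(1,2)$-case) using that $\ad^\star$ (resp.\ $\AD^\star$) is a representation, and identify the remaining $\g$-valued (resp.\ $\g^*$-valued) part with \eqref{eq:homLiebi} (resp.\ \eqref{ups}) via pairing---exactly the paper's reduction to its equations \eqref{23} and \eqref{eq:dual}. The converse is likewise the same: \eqref{eq:inv2} forces $\phi_{\g^*}=(\phi_\g^{-1})^*$, \eqref{eq:inv1} forces the mixed bracket to be \eqref{eq:bracketdouble}, and the $(2,1)$-Hom-Jacobi identity recovers \eqref{eq:homLiebi}.
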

 \begin{proof}

 Let $(\g,\g^*)$ be a purely Hom-Lie bialgebra. For all $x,y\in \g$ and $\xi,\eta\in \g^*$, by  \eqref{eq:bracketdouble}, we have
 \begin{eqnarray}\label{eq:homomorphism}
 \big{(}\phi\oplus(\phi^{-1})^*\big{)}[x+\xi,y+\eta]_{\bowtie}=[ \big{(}\phi\oplus(\phi^{-1})^*\big{)}(x+\xi), \big{(}\phi\oplus(\phi^{-1})^*\big{)}(y+\eta)]_{\bowtie}.
 \end{eqnarray}

Now we show that $[\cdot,\cdot]_{\bowtie}$ satisfies the Hom-Jacobi identity:
\begin{eqnarray}\label{homjid}
 [[x+\xi,y+\eta]_{\bowtie},\big{(}\phi\oplus(\phi^{-1})^*\big{)}(z+\delta)]_{\bowtie}+c.p.=0,
 \end{eqnarray}
 which is equivalent to the following two equalities
    \begin{eqnarray}
    \label{23}[[x,y]_\g,(\phi^{-1})^*(\delta)]_{\bowtie}+[[y,\delta]_{\bowtie},\phi (x)]_{\bowtie}+[[\delta,x]_{\bowtie},\phi (y)]_{\bowtie}&=&0,\\
    ~\label{eq:dual}[[\xi,\eta]_{\g^*},\phi( z)]_{\bowtie}+[[\eta,z]_{\bowtie},(\phi^{-1})^*(\xi)]_{\bowtie}+[[z,\xi]_{\bowtie},(\phi^{-1})^*(\eta)]_{\bowtie}&=&0.
    \end{eqnarray}
By straightforward computation, we have
    \begin{eqnarray}
      &&[[x,y]_\g,(\phi^{-1})^*(\delta)]_{\bowtie}+[[y,\delta]_{\bowtie},\phi (x)]_{\bowtie}+[[\delta,x]_{\bowtie},\phi (y)]_{\bowtie}\nonumber\\
      &=&\big{(}-\AD_{(\phi^{-2})^*(\delta)}^*\phi^2([x,y]_\g)+\AD_{\ad_{\phi^2(y)}^*(\phi^{-3})^*(\delta)}^*\phi^3(x)-\AD_{\ad_{\phi^2(x)}^*(\phi^{-3})^*(\delta)}^*\phi^3(y)\nonumber\\
      &&-[\AD_{(\phi^{-1})^*(\delta)}^*\phi^2(y),\phi (x)]_\g+[\AD_{(\phi^{-1})^*(\delta)}^*\phi^2(x),\phi (y)]_\g\big{)}\nonumber\\
      &&+\big{(}\ad_{\phi([x,y]_\g)}^*(\phi^{-3})^*(\delta)-\ad_{\phi^2(x)}^*\circ\ad_{\phi^3(y)}^*(\phi^{-4})^*(\delta)+\ad_{\phi^2(y)}^*\circ\ad_{\phi^3(x)}^*(\phi^{-4})^*(\delta)\big{)}.\nonumber
          \end{eqnarray}
By Corollary \ref{lem:rep}, $\ad_x^\star(\xi)=\ad_{\phi (x)}^*(\phi^{-2})^*(\xi)$ is a representation of $\g$ on $\g^*$ with respect to $(\phi^{-1})^*$. Thus, we have
\begin{eqnarray*}
&&\ad_{\phi([x,y]_\g)}^*(\phi^{-3})^*(\delta)-\ad_{\phi^2(x)}^*\circ\ad_{\phi^3(y)}^*(\phi^{-4})^*(\delta)+\ad_{\phi^2(y)}^*\circ\ad_{\phi^3(x)}^*(\phi^{-4})^*(\delta)\\
&=&\ad_{[x,y]_\g}^\star\big{(}(\phi^{-1})^*\delta\big{)}-\ad_{\phi (x)}^\star\ad_{y}^\star\delta+\ad_{\phi (y)}^\star\ad_{x}^\star\delta\\
&=&0.
\end{eqnarray*}
 On the other hand, for any $\gamma\in \g^*$, we have
  \begin{eqnarray}
  &&\langle\gamma,-\AD_{(\phi^{-2})^*(\delta)}^*\phi^2([x,y]_\g)+\AD_{\ad_{\phi^2(y)}^*(\phi^{-3})^*(\delta)}^*\phi^3(x)-\AD_{\ad_{\phi^2(x)}^*(\phi^{-3})^*(\delta)}^*\phi^3(y)\nonumber\\
      &&\qquad-[\AD_{(\phi^{-1})^*(\delta)}^*\phi^2(y),\phi (x)]_\g+[\AD_{(\phi^{-1})^*(\delta)}^*\phi^2(x),\phi (y)]_\g\rangle\nonumber\\
      &=&\langle[(\phi^{-2})^*(\delta),\gamma]_{\g^*},\phi^2([x,y]_\g)\rangle-\langle[\ad_{\phi^2(y)}^*(\phi^{-3})^*(\delta),\gamma]_{\g^*},\phi^3(x)\rangle+\langle[\ad_{\phi^2(x)}^*(\phi^{-3})^*(\delta),\gamma]_{\g^*},\phi^3(y)\rangle\nonumber\\
      &&+\langle\AD_{(\phi^{-1})^*(\delta)}\circ\ad_{\phi (x)}^*(\gamma),\phi^2(y)\rangle-\langle\AD_{(\phi^{-1})^*(\delta)}\circ\ad_{\phi (y)}^*(\gamma),\phi^2(x)\rangle\nonumber\\
       &=&\langle\delta\wedge(\phi^2)^*(\gamma),\Delta([x,y]_\g)\rangle-\langle\ad_{\phi^{-1}(y)}^*\big{(}\delta\wedge(\phi^2)^*(\gamma)\big{)},\Delta(x)\rangle+\langle\ad_{\phi^{-1}(x)}^*\big{(}\delta\wedge(\phi^2)^*(\gamma)\big{)},\Delta(y)\rangle\nonumber\\
      &=&\langle\Delta([x,y]_\g)+\ad_{\phi^{-1}(y)}\Delta(x)-\ad_{\phi^{-1}(x)}\Delta(y),\delta\wedge(\phi^2)^*(\gamma)\rangle\nonumber\\
      &=&0,\nonumber
  \end{eqnarray}
which implies that \eqref{23} holds. Similarly, by Lemma \ref{mudelta}, we deduce that \eqref{eq:dual} holds.  Therefore, $[\cdot,\cdot]_{\bowtie}$ satisfies the Hom-Jacobi identity \eqref{homjid}.

By \eqref{eq:homomorphism} and \eqref{homjid}, $(\g\oplus\g^*,[\cdot,\cdot]_{\bowtie},\phi\oplus(\phi^{-1})^*)$ is a Hom-Lie algebra.
Furthermore, it is obvious that the bilinear form $B$ defined by \eqref{eq:bilinear form} is invariant. Therefore, $(\g\oplus\g^*;\g,\g^*)$ is Manin triple for Hom-Lie algebras.

Conversely, if $(\g\oplus \g^*;\g,\g^*)$ is a Manin triple for Hom-Lie algebras, then by  \eqref{eq:inv2}, we obtain $\phi_{\g^*}=(\phi_\g^{-1})^*$. By \eqref{eq:inv1},  we can deduce that
\begin{eqnarray}
[x+\xi,y+\eta]_{\g\oplus \g^*}&=&[x+\xi,y+\eta]_{\bowtie}.\nonumber
\end{eqnarray}
Then similarly as the proof of \eqref{23}, we get the compatibility condition \eqref{eq:homLiebi}. Thus, $(\g,\g^*)$ is a purely Hom-Lie bialgebra.
\end{proof}

 \section{Triangular Hom-Lie bialgebras}

For any $r\in\wedge^2\g$, the induced skew-symmetric linear map $r^\sharp:\g^*\rightarrow\g$ is defined by
\begin{eqnarray}
\langle r^\sharp(\xi),\eta\rangle=\langle r,\xi\wedge\eta\rangle.
\end{eqnarray}

\begin{defi}
A purely Hom-Lie bialgebra $(\g,\g^*)$ is said to be {\bf  coboundary} if
\begin{eqnarray}\label{eq:coboundary}
\Delta(x)=(\dM_{\ad^{-2}}r)(x)=[\phi^{-2}(x),r]_\g,\quad \mbox{for some} \quad r\in\wedge^2\g.
\end{eqnarray}
\end{defi}

\begin{pro}\label{coobracket}
Let $(\g,[\cdot,\cdot]_\g,\phi)$ be a Hom-Lie algebra and $\Delta:\g\longrightarrow \wedge^2\g$ defined by \eqref{eq:coboundary} for some $r\in \wedge^2\g$ satisfying
\begin{equation}\label{eq:conr}
r^\sharp\circ(\phi^{-1})^*=\phi \circ r^\sharp.
\end{equation}
Then for all $\xi,\eta\in\g^*$, we have
\begin{eqnarray}\label{xieta}
~[\xi,\eta]_{\g^*}=\ad_{r^\sharp(\xi)}^\star\eta-\ad_{r^\sharp(\eta)}^\star\xi=\ad_{\phi(r^\sharp(\xi))}^*(\phi^{-2})^*(\eta)-\ad_{\phi(r^\sharp(\eta))}^*(\phi^{-2})^*(\xi),
\end{eqnarray}
where $[\cdot,\cdot]_{\g^*}$ is defined by $\langle[\xi,\eta]_{\g^*},x\rangle=\langle\Delta(x),\xi\wedge\eta\rangle$. Furthermore, we have
 \begin{eqnarray}\label{eq:formu}
~[r^\sharp\circ\phi^*(\xi),r^\sharp\circ\phi^*(\eta)]_\g-r^\sharp\circ\phi^*([\xi,\eta]_{\g^*})=\frac{1}{2}[r,r]_\g(\xi,\eta).
\end{eqnarray}
\end{pro}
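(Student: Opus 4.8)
The plan is to verify the two displayed identities \eqref{xieta} and \eqref{eq:formu} by direct computation, pairing both sides against suitable test elements and reducing everything to the definition $\Delta(x)=[\phi^{-2}(x),r]_\g$ together with the compatibility condition \eqref{eq:conr}. For the first identity, I would start from $\langle[\xi,\eta]_{\g^*},x\rangle=\langle\Delta(x),\xi\wedge\eta\rangle=\langle[\phi^{-2}(x),r]_\g,\xi\wedge\eta\rangle$, then expand the extended bracket on $\wedge^\bullet\g$ using \eqref{eq:HG}: writing $r=\sum a_i\wedge b_i$, one gets $[\phi^{-2}(x),r]_\g=\sum\big([\phi^{-2}(x),a_i]_\g\wedge\phi(b_i)-[\phi^{-2}(x),b_i]_\g\wedge\phi(a_i)\big)$. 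Pairing against $\xi\wedge\eta$ and carefully distributing the contractions, I expect to recognize the result as $\langle x, \ad_{r^\sharp(\xi)}^\star\eta-\ad_{r^\sharp(\eta)}^\star\xi\rangle$ after using $\langle r^\sharp(\xi),\eta\rangle=\langle r,\xi\wedge\eta\rangle$, the definition of $\ad^\star$ from Corollary \ref{lem:rep}, and the condition \eqref{eq:conr} to move the $\phi$'s past $r^\sharp$. The bookkeeping of the $\phi$-powers is where one must be most careful: the formula for $\ad^\star$ carries $\phi$ and $\phi^{-2}$, and matching these against the $\phi^{-2}$ in $\Delta$ and the $\phi$'s produced by the extended bracket is the delicate part.

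For the second identity \eqref{eq:formu}, I would pair both sides against an arbitrary $\zeta\in\g^*$. The left-hand side becomes $\langle[r^\sharp\phi^*(\xi),r^\sharp\phi^*(\eta)]_\g,\zeta\rangle-\langle r^\sharp\phi^*([\xi,\eta]_{\g^*}),\zeta\rangle$; the second term I would rewrite using the adjoint of $r^\sharp$ (note $\langle r^\sharp(\alpha),\beta\rangle=-\langle r^\sharp(\beta),\alpha\rangle$ by skew-symmetry of $r$) and then substitute the expression for $[\xi,\eta]_{\g^*}$ just obtained in \eqref{xieta}. The right-hand side is $\frac12\langle[r,r]_\g(\xi,\eta),\zeta\rangle$, and here I would recall the standard expansion of $[r,r]_\g\in\wedge^3\g$ for $r\in\wedge^2\g$ via \eqref{eq:HG}; contracting with $\xi,\eta,\zeta$ produces a sum of terms each of the shape $\langle[\,r^\sharp(\cdot),r^\sharp(\cdot)\,]_\g,\cdot\rangle$ with appropriate $\phi$'s. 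The whole identity should then follow by matching these expansions term by term, again using \eqref{eq:conr} to normalize the $\phi$-powers so that both sides align.

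The main obstacle I anticipate is purely the combinatorial/index-tracking one: keeping the powers of $\phi$ consistent throughout, since the extended bracket \eqref{eq:HG}, the coadjoint representation $\ad^\star$, the dualization conventions, and the hypothesis \eqref{eq:conr} each introduce their own shifts, and a single miscount collapses the argument. A secondary point to handle cleanly is that \eqref{eq:conr} is exactly the hypothesis that makes $r^\sharp$ intertwine $(\phi^{-1})^*$ on $\g^*$ with $\phi$ on $\g$, which is what legitimizes replacing, e.g., $\phi^k\circ r^\sharp$ by $r^\sharp\circ((\phi^{-1})^*)^{-k}=r^\sharp\circ(\phi^k)^*$ wherever needed; I would state this intertwining consequence once at the outset and invoke it repeatedly rather than re-deriving it. No conceptually new idea beyond these manipulations seems required, so once the conventions are fixed the proof should be a (somewhat lengthy) verification.
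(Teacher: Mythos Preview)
Your proposal is correct and follows essentially the same route as the paper's own proof: both identities are verified by pairing against test elements, expanding $\Delta(x)=[\phi^{-2}(x),r]_\g$ via the extended bracket \eqref{eq:HG} (the paper writes $r=r_1\wedge r_2$ for simplicity, which is your $\sum a_i\wedge b_i$), and then chasing the $\phi$-powers using \eqref{eq:conr} and the definition of $\ad^\star$. Your anticipated obstacles---the bookkeeping of $\phi$-shifts and the intertwining consequence of \eqref{eq:conr}---are exactly the points where the paper's computation is most delicate, so nothing is missing.
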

\begin{proof}
To be simple, assume $r=r_1\wedge r_2$. By \eqref{eq:coboundary} and \eqref{adphi}, we have
\begin{eqnarray*}
\langle[\xi,\eta]_{\g^*},x\rangle
&=&\langle\xi\wedge\eta,\Delta(x)\rangle
=\langle\xi\wedge\eta,[\phi^{-2}(x),r]_\g\rangle\\
&=&\langle\xi\wedge\eta,[\phi^{-2}(x),r_1]_\g\wedge\phi(r_2)+\phi(r_1)\wedge[\phi^{-2}(x),r_2]_\g\rangle\\
&=&\langle[\phi^{-2}(x),r_1]_\g,\xi\rangle\langle\phi(r_2),\eta\rangle
-\langle[\phi^{-2}(x),r_1]_\g,\eta\rangle\langle\phi(r_2),\xi\rangle\\
&&+\langle\phi(r_1),\xi\rangle\langle[\phi^{-2}(x),r_2]_\g,\eta\rangle
-\langle\phi(r_1),\eta\rangle\langle[\phi^{-2}x,r_2]_\g,\xi\rangle\\
&=&\langle[\phi^{-2}(x),\langle r_2,\phi^*(\eta)\rangle r_1]_\g,\xi\rangle
-\langle[\phi^{-2}(x),\langle r_2,\phi^*(\xi)\rangle r_1]_\g,\eta\rangle\\
&&+\langle[\phi^{-2}(x),\langle r_1,\phi^*(\xi)\rangle r_2]_\g,\eta\rangle
-\langle[\phi^{-2}(x),\langle r_1,\phi^*(\eta)\rangle r_2]_\g,\xi\rangle\\
&=&-\langle[\phi^{-2}(x),r^\sharp(\phi^*(\eta))]_\g,\xi\rangle
+\langle[\phi^{-2}(x),r^\sharp(\phi^*(\xi))]_\g,\eta\rangle\\
&=&\langle\phi^{-2}(x),-\ad_{r^\sharp(\phi^*(\eta))}^*\xi+\ad_{r^\sharp(\phi^*(\xi))}^*\eta\rangle\\
&=&\langle x,-\ad_{\phi^2(r^\sharp(\phi^*(\eta)))}^*(\phi^{-2})^*(\xi)
+\ad_{\phi^2(r^\sharp(\phi^*(\xi)))}^*(\phi^{-2})^*(\eta)\rangle,\\
&=&\langle x,\ad_{\phi(r^\sharp(\xi))}^*(\phi^{-2})^*(\eta)-\ad_{\phi(r^\sharp(\eta))}^*(\phi^{-2})^*(\xi)\rangle\\
&=&\langle x,\ad_{r^\sharp(\xi)}^\star\eta-\ad_{r^\sharp(\eta)}^\star\xi\rangle,
\end{eqnarray*}
which implies that \eqref{xieta} holds.

On the other hand,   for all $\theta\in\g^*$, we have
\begin{eqnarray*}
&&\langle[\phi(r^\sharp(\xi)),\phi(r^\sharp(\eta))]_\g-\phi(r^\sharp([\xi,\eta]_{\g^*})),\theta\rangle\\
&=&\langle[\phi(r^\sharp(\xi)),\phi(r^\sharp(\eta))]_\g,\theta\rangle
+\langle[\xi,\eta]_{\g^*},r^\sharp(\phi^*(\theta))\rangle\\
&=&\langle[\phi(r^\sharp(\xi)),\phi(r^\sharp(\eta))]_\g,\theta\rangle
+\langle\ad_{\phi(r^\sharp(\xi))}^*(\phi^{-2})^*(\eta)-\ad_{\phi(r^\sharp(\eta))}^*(\phi^{-2})^*(\xi),r^\sharp(\phi^*(\theta))\rangle\\
&=&\langle[\phi(r^\sharp(\xi)),\phi(r^\sharp(\eta))]_\g,\theta\rangle
-\langle(\phi^{-2})^*(\eta),[\phi(r^\sharp(\xi),r^\sharp(\phi^*(\theta)))]_\g\rangle\\
&&+\langle(\phi^{-2})^*(\xi),[\phi(r^\sharp(\eta)),r^\sharp(\phi^*(\theta))]_\g\rangle\\
&=&\langle[r^\sharp((\phi^{-1})^*(\xi)),r^\sharp((\phi^{-1})^*(\eta))]_\g,\theta\rangle
-\langle(\phi^{-2})^*(\eta),[r^\sharp((\phi^{-1})^*(\xi)),r^\sharp(\phi^*(\theta)]_\g\rangle\\
&&+\langle(\phi^{-2})^*(\xi),[r^\sharp((\phi^{-1})^*(\eta)),r^\sharp(\phi^*(\theta))]_\g\rangle\\
&=&-\langle r_1,(\phi^{-1})^*(\xi)\rangle\langle[r_2,r_1]_\g,\theta\rangle\langle r_2,(\phi^{-1})^*(\eta)\rangle
-\langle r_2,(\phi^{-1})^*(\xi)\rangle\langle[r_1,r_2]_\g,\theta\rangle\langle r_1,(\phi^{-1})^*(\eta)\rangle\\
&&+\langle r_1,(\phi^{-1})^*(\xi)\rangle\langle[r_2,r_1]_\g,(\phi^{-2})^*(\eta)\rangle\langle r_2,\phi^*(\theta)\rangle
+\langle r_2,(\phi^{-1})^*(\xi)\rangle\langle[r_1,r_2]_\g,(\phi^{-2})^*(\eta)\rangle\langle r_1,\phi^*(\theta)\rangle\\
&&-\langle r_1,(\phi^{-1})^*(\eta)\rangle\langle[r_2,r_1]_\g,(\phi^{-2})^*(\xi)\rangle\langle r_2,\phi^*(\theta)\rangle
-\langle r_2,(\phi^{-1})^*(\eta)\rangle\langle[r_1,r_2]_\g,(\phi^{-2})^*(\xi)\rangle\langle r_1,\phi^*(\theta)\rangle\\
&=&\frac{1}{2}[r,r]_\g((\phi^{-2})^*(\xi),(\phi^{-2})^*(\eta),\theta)\\
&=&\langle\frac{1}{2}[r,r]_\g((\phi^{-2})^*(\xi),(\phi^{-2})^*(\eta)),\theta\rangle,
\end{eqnarray*}
which implies that
\eqref{eq:formu}
holds.
\end{proof}

\begin{thm}\label{thm:coboundary}
Let $(\g,[\cdot,\cdot]_\g,\phi)$ be a  Hom-Lie algebra.  Then $(\g^*,[\cdot,\cdot]_{\g^*},(\phi^{-1})^*)$ is a Hom-Lie algebra, where $[\cdot,\cdot]_{\g^*}$ is defined by \eqref{xieta} for some $r\in\wedge^2\g$ satisfying \eqref{eq:conr}, if and only if
\begin{eqnarray}
\label{phir}
\ad_{x}[r,r]_\g&=&0,
\end{eqnarray}
  Under these conditions, $(\g,\g^*)$ is a coboundary purely Hom-Lie bialgebra.
\end{thm}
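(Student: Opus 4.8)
The plan is to verify, for the triple $(\g^*,[\cdot,\cdot]_{\g^*},(\phi^{-1})^*)$, the three defining properties of a Hom-Lie algebra --- skew-symmetry of the bracket, the homomorphism property of $(\phi^{-1})^*$, and the Hom-Jacobi identity --- and to isolate which one is sensitive to $[r,r]_\g$. Skew-symmetry of $[\cdot,\cdot]_{\g^*}$ is read off immediately from \eqref{xieta}. For the homomorphism property I would apply $(\phi^{-1})^*$ to \eqref{xieta}, use that $\ad^\star$ is a representation of $\g$ with respect to $(\phi^{-1})^*$ (Corollary \ref{lem:rep}), so that $(\phi^{-1})^*\circ\ad^\star_z=\ad^\star_{\phi(z)}\circ(\phi^{-1})^*$, and then feed in \eqref{eq:conr} in the form $\phi\circ r^\sharp=r^\sharp\circ(\phi^{-1})^*$; this gives $(\phi^{-1})^*[\xi,\eta]_{\g^*}=[(\phi^{-1})^*\xi,(\phi^{-1})^*\eta]_{\g^*}$, with no hypothesis on $[r,r]_\g$. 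It is also worth recording that, by dualizing, \eqref{eq:conr} is equivalent to the $\phi$-invariance $\phi(r)=r$ in $\wedge^2\g$; this reformulation makes the remaining steps more transparent.

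The substantive point is the Hom-Jacobi identity, and this is exactly where $\ad_x[r,r]_\g$ enters. Writing $\mathrm{Jac}(\xi,\eta,\theta)=[[\xi,\eta]_{\g^*},(\phi^{-1})^*(\theta)]_{\g^*}+\mathrm{c.p.}$, I would pair $\mathrm{Jac}(\xi,\eta,\theta)$ with an arbitrary $x\in\g$ and rewrite $\langle[\cdot,\cdot]_{\g^*},x\rangle$ using the dual description $\langle[\xi,\eta]_{\g^*},x\rangle=\langle\Delta(x),\xi\wedge\eta\rangle=\langle[\phi^{-2}(x),r]_\g,\xi\wedge\eta\rangle$ already exploited in Proposition \ref{coobracket}, then expand repeatedly with \eqref{eq:formu} --- which says precisely that $r^\sharp\circ\phi^*\colon\g^*\to\g$ fails to intertwine the two brackets by the amount $\frac{1}{2}[r,r]_\g$ --- together with the Hom-Jacobi identity of $\g$ itself, which kills the three ``$\g$-Jacobiator'' contributions. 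The terms that survive should then reorganize, once the accumulated powers of $\phi$ are collected, into $\pm\frac{1}{2}\langle\ad_x[r,r]_\g,\Xi\rangle$ for a fixed $\phi^*$-twist $\Xi$ of $\xi\wedge\eta\wedge\theta$. Since $\phi$ is an automorphism --- hence so are all the induced twists on $\g$ and on $\wedge^3\g^*$ --- this shows $\mathrm{Jac}\equiv 0$ if and only if $\ad_x[r,r]_\g=0$ for all $x\in\g$, proving both implications at once. The only real obstacle is the bookkeeping of the $\phi$-exponents: one must track the twists carefully to confirm that the surviving pairing is against $\ad_x[r,r]_\g$ in the stated normalization (any residual $\ad_{\phi^k(x)}$ would give an equivalent condition, but one wants the clean form). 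Beyond that, the computation follows the classical pattern by which an $r$-matrix on a Lie algebra yields a Lie cobracket exactly when its Schouten square is $\ad$-invariant, with every twist dictated by \eqref{xieta}, \eqref{eq:coboundary} and \eqref{eq:conr}.

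Finally, assuming $\ad_x[r,r]_\g=0$, so that $(\g^*,[\cdot,\cdot]_{\g^*},(\phi^{-1})^*)$ is a Hom-Lie algebra, I would conclude that $(\g,\g^*)$ is a coboundary purely Hom-Lie bialgebra. The cobracket $\Delta(x)=[\phi^{-2}(x),r]_\g$ intertwines $\phi$, since $\phi(\Delta(x))=[\phi^{-1}(x),\phi(r)]_\g=[\phi^{-1}(x),r]_\g=\Delta(\phi(x))$ by $\phi(r)=r$; moreover $\Delta=\dM_{\ad^{-2}}r$ is a coboundary for the cohomology of $\g$ with coefficients in $(\wedge^2\g,\ad^{-2})$, hence a cocycle: $\dM_{\ad^{-2}}\Delta=\dM_{\ad^{-2}}^{2}r=0$. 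By Example \ref{ex:derivation}, this $1$-cocycle condition is exactly the compatibility \eqref{eq:homLiebi} between $[\cdot,\cdot]_\g$ and the bracket $[\cdot,\cdot]_{\g^*}$ dual to $\Delta$, so $(\g,\g^*)$ is a purely Hom-Lie bialgebra, and it is coboundary by \eqref{eq:coboundary}. This completes the argument.
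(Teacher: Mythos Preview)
Your proposal is correct and follows essentially the same route as the paper: the skew-symmetry and the homomorphism property of $(\phi^{-1})^*$ are handled exactly as you describe (the paper dispatches the latter in one line), and for the Hom-Jacobi identity both arguments use \eqref{xieta}, the representation property of $\ad^\star$ (equivalently the Hom-Jacobi identity of $\g$), and formula \eqref{eq:formu} to reduce $\langle\mathrm{Jac}(\xi,\eta,\theta),x\rangle$ to a pairing with $\ad_x[r,r]_\g$ against a $\phi^*$-twist of $\xi\wedge\eta\wedge\theta$. The only cosmetic difference is ordering: the paper first simplifies the Jacobiator in $\g^*$ to $\ad^\star_{[r^\sharp(\xi),r^\sharp(\eta)]_\g-r^\sharp([\xi,\eta]_{\g^*})}(\phi^{-1})^*(\delta)+\mathrm{c.p.}$ \emph{before} pairing with $x$, whereas you pair first and then expand; the same ingredients appear either way, and the paper's twist is $(\phi^{-3})^*(\xi\wedge\eta\wedge\theta)$. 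Your final paragraph, deriving the bialgebra compatibility \eqref{eq:homLiebi} from $\Delta=\dM_{\ad^{-2}}r$ being a coboundary (hence a cocycle) and Example~\ref{ex:derivation}, is more explicit than the paper, which leaves that step tacit.
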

\begin{proof}
By straightforward computation, $(\phi^{-1})^*$ is an algebra automorphism, i.e. $(\phi^{-1})^*([\xi,\eta]_{\g^*})=[(\phi^{-1})^*(\xi),(\phi^{-1})^*(\eta)]_{\g^*}$ if and only if \eqref{eq:conr} holds.

By Proposition \ref{coobracket}, \eqref{phir} and the fact that $\ad^\star$ is a representation, we have
\begin{eqnarray*}
&&[(\phi^{-1})^*(\xi),[\eta,\delta]_{\g^*}]_{\g^*}+c.p.\\
&=&\ad_{\phi(r^\sharp(\xi))}^\star\Big{(}\ad_{r^\sharp(\eta)}^\star\delta
-\ad_{r^\sharp(\delta)}^\star\eta\Big{)}
-\ad_{\phi(r^\sharp([\eta,\delta]_{\g^*}))}^*(\phi^{-1})^*(\xi)+c.p.\\
&=&\ad_{\phi(r^\sharp(\xi))}^\star\big{(}\ad_{r^\sharp(\eta)}^\star(\delta)\big{)}
-\ad_{\phi(r^\sharp(\eta))}^\star\big{(}\ad_{r^\sharp(\xi)}^\star(\delta)\big{)}\\
&&-\ad_{r^\sharp([\xi,\eta]_{\g^*})}^\star(\phi^{-1})^*(\delta)+c.p.\\
&=&\ad_{[r^\sharp(\xi),r^\sharp(\eta)]_\g
-r^\sharp([\xi,\eta]_{\g^*})}^\star(\phi^{-1})^*(\delta).
\end{eqnarray*}
For all $x\in\g$, we have
\begin{eqnarray*}
&&\langle[(\phi^{-1})^*(\xi),[\eta,\delta]_{\g^*}]_{\g^*}+c.p.,x\rangle\\
&=&-\langle(\phi^{-3})^*\delta,\ad_{[\phi(r^\sharp(\xi)),\phi(r^\sharp(\eta))]_\g-\phi(r^\sharp([\xi,\eta]_{\g^*}))}x\rangle+c.p.\\
&=&-\langle\ad_x^*(\phi^{-3})^*(\delta),[\phi(r^\sharp(\xi)),\phi(r^\sharp(\eta))]_\g-\phi(r^\sharp([\xi,\eta]_{\g^*}))\rangle+c.p..
\end{eqnarray*}
Thus, by Proposition \ref{coobracket} and \eqref{5}, we have
\begin{eqnarray*}
&&\langle[(\phi^{-1})^*(\xi),[\eta,\delta]_{\g^*}]_{\g^*}+c.p.,x\rangle\\
&=&-\langle\ad_x^*(\phi^{-3})^*(\delta),\frac{1}{2}[r,r]_\g((\phi^{-2})^*(\xi),(\phi^{-2})^*(\eta))\rangle+c.p.\\
&=&-\frac{1}{2}[r,r]_\g((\phi^{-2})^*(\xi),(\phi^{-2})^*(\eta),\ad_x^*(\phi^{-3})^*(\delta))+c.p.\\
&=&-\frac{1}{2}\langle[r,r]_\g,\ad_x^*((\phi^{-3})^*(\xi)\wedge(\phi^{-3})^*(\eta)\wedge(\phi^{-3})^*(\delta))\rangle\\
&=&\frac{1}{2}\langle\ad_x[r,r]_\g,(\phi^{-3})^*(\xi\wedge\eta\wedge\theta)\rangle,
\end{eqnarray*}
which implies that Hom-Jacobi identity is satisfied if and only if $\ad_x[r,r]_\g=0,$ for all $x\in\g.$
\end{proof}

\begin{ex}{\rm
  Let $\{e_1,e_2,e_3\}$ be a basis of a 3-dimensional vector space $\g$.   Define a skew-symmetric bracket operation $[\cdot,\cdot]_\g$ on $\g$ by
  $$
  [e_1,e_3]=e_2.
  $$
  Then $(\g,[\cdot,\cdot]_\g,\phi)$ is a Hom-Lie algebra, where $\phi=\left(\begin{array}{ccc}a&0&0\\0&1&0\\0&0&\frac{1}{a}\end{array}\right).$ Let $r=e_1\wedge e_3$ and $r^\sharp=\left(\begin{array}{ccc}0&0&1\\0&0&0\\-1&0&0\end{array}\right)$. It is straightforward to see that \eqref{eq:conr} holds. Furthermore, we have $[r,r]_\g=-2e_1\wedge e_2\wedge e_3$ and $\ad_x[r,r]_\g=0$ for all $x\in\g.$ Thus, conditions in Theorem \ref{thm:coboundary} are satisfied and this gives rise to a coboundary purely Hom-Lie bialgebra. More precisely, we have
  $$
  \Delta(e_1)=\frac{1}{a}e_1\wedge e_2,\quad\Delta(e_2)=0,\quad \Delta(e_3)=-ae_2\wedge e_3,
  $$
  and the Hom-Lie algebra on the dual space $\g^*$ is given by
  $$
  [e^1,e^2]_{\g^*}=\frac{1}{a}e^1,\quad[e^1,e^3]_{\g^*}=0, \quad [e^2,e^3]_{\g^*}=-ae^3,
  $$
  where $\{e^1,e^2,e^3\}$ are the dual basis.
  }
\end{ex}

\begin{defi}
Let $\g$ be a regular Hom-Lie algebra and $r\in\wedge^2\g$ satisfying \eqref{eq:conr}. The equation
 \begin{eqnarray}\label{CYBE}
 ~[r,r]_\g=0.
 \end{eqnarray}
is called the {\bf classical Hom-Yang-Baxter equation}.
  A \textbf{triangular purely Hom-Lie bialgebra} is a coboundary purely Hom-Lie bialgebra, in which $r$ is a solution of the  classical Hom-Yang-Baxter equation.
\end{defi}

\begin{rmk}
  Due to the condition \eqref{eq:coboundary}, the classical Hom-Yang-Baxter equation defined above is not the same as existing ones.
\end{rmk}

\begin{ex}\label{ex:tri}{\rm
  Let $\{e_1,e_2\}$ be a basis of a 2-dimensional vector space $\g$ and $\{e^1,e^2\}$ the dual basis of $\g^*$. Define a skew-symmetric bracket operation $[\cdot,\cdot]_\g$ on $\g$ by
  $$
  [e_1,e_2]_\g=e_2.
  $$
  Then $(\g,[\cdot,\cdot]_\g,\phi)$ is a Hom-Lie algebra in which $\phi=\left(\begin{array}{cc}1&1\\0&1\end{array}\right)$. See \cite{CIP} for more details about the classification of 2-dimensional Hom-Lie algebra. Since $\wedge^3\g=0$, any $r\in\wedge^2\g$ is a solution of the classical Hom-Yang-Baxter equation. In particular, let $r=e_1\wedge e_2$. Then the Hom-Lie algebra structure on $\g^*$ is given by
  $$
  [e^1,e^2]_{\g^*}=e^1,\quad (\phi^{-1})^*=\left(\begin{array}{cc}1&0\\-1&1\end{array}\right)
  $$
  Consequently, $(\g,[\cdot,\cdot]_\g,\phi)$ and $(\g^*,[\cdot,\cdot]_{\g^*},(\phi^{-1})^*)$ constitute a triangular purely Hom-Lie bialgebras.
  }
\end{ex}

\begin{cor}\label{rYBE}
 If $r\in\wedge^2\g$  a solution of the classical Hom-Yang-Baxter equation \eqref{CYBE}, then $r^\sharp$ is a homomorphism from the Hom-Lie algebra $(\g^*,[\cdot,\cdot]_{\g^*},(\phi_\g^{-1})^*)$ to the Hom-Lie algebra $(\g,[\cdot,\cdot]_{\g},\phi_\g)$.
\end{cor}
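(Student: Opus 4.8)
The plan is to show directly that $r^\sharp$ intertwines the two brackets, i.e. that
$$
[r^\sharp(\xi),r^\sharp(\eta)]_\g = r^\sharp([\xi,\eta]_{\g^*}), \qquad \forall \xi,\eta\in\g^*,
$$
and that $r^\sharp$ commutes with the structure maps, i.e. $r^\sharp\circ(\phi_\g^{-1})^* = \phi_\g\circ r^\sharp$. The latter is exactly hypothesis \eqref{eq:conr}, which is part of the standing assumption that makes $[\cdot,\cdot]_{\g^*}$ defined via \eqref{xieta}; so the only real content is the bracket-compatibility identity.

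First I would invoke Proposition \ref{coobracket}: since $r$ solves the classical Hom-Yang-Baxter equation \eqref{CYBE}, we have $[r,r]_\g=0$, so formula \eqref{eq:formu} collapses to
$$
[r^\sharp\circ\phi^*(\xi),\,r^\sharp\circ\phi^*(\eta)]_\g = r^\sharp\circ\phi^*([\xi,\eta]_{\g^*}),\qquad\forall\xi,\eta\in\g^*.
$$
Then I would substitute $(\phi^{-1})^*(\xi)$ for $\xi$ and $(\phi^{-1})^*(\eta)$ for $\eta$. Using that $(\phi^{-1})^*$ is an algebra automorphism of $\g^*$ (which holds under \eqref{eq:conr}, as recorded in Theorem \ref{thm:coboundary}), the right-hand side becomes $r^\sharp\circ\phi^*((\phi^{-1})^*[\xi,\eta]_{\g^*}) = r^\sharp([\xi,\eta]_{\g^*})$, while $r^\sharp\circ\phi^*\circ(\phi^{-1})^* = r^\sharp$ on the left, yielding precisely $[r^\sharp(\xi),r^\sharp(\eta)]_\g = r^\sharp([\xi,\eta]_{\g^*})$. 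Together with \eqref{eq:conr} this says $r^\sharp$ is a morphism of Hom-Lie algebras.

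There is essentially no obstacle here: the heavy computation has already been done in Proposition \ref{coobracket}, and this corollary is just the specialization $[r,r]_\g=0$ combined with a change of variables absorbing the $\phi^*$ factors. The only point requiring a little care is bookkeeping the $\phi$-twists correctly — making sure that the substitution $\xi\mapsto(\phi^{-1})^*(\xi)$ interacts properly with the automorphism property of $(\phi^{-1})^*$ on $\g^*$ and with \eqref{eq:conr} — but this is routine. An alternative, fully self-contained route would be to unwind \eqref{xieta} and pair $[r^\sharp(\xi),r^\sharp(\eta)]_\g - r^\sharp([\xi,\eta]_{\g^*})$ against an arbitrary $\theta\in\g^*$, repeating the computation in the proof of \eqref{eq:formu} with $[r,r]_\g=0$; but reusing Proposition \ref{coobracket} is cleaner and is the approach I would present.
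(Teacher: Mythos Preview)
Your proposal is correct and follows exactly the intended route: the paper states this corollary without proof, as it is an immediate consequence of Proposition \ref{coobracket} (specifically \eqref{eq:formu}) upon setting $[r,r]_\g=0$, together with the compatibility \eqref{eq:conr}. Your substitution $\xi\mapsto(\phi^{-1})^*(\xi)$ to strip the $\phi^*$ factors is precisely the right bookkeeping.
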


At the end of this section, we construct a solution of the classical Hom-Yang-Baxter equation using  the Hom-$\huaO$-operator introduced in \cite{caisheng}, which is a generalization of an $\huaO$-operator introduced by Kupershmidt in \cite{Kupershmidt2}.
\begin{defi}\label{ooperator}
Let $(\g,[\cdot,\cdot]_\g,\phi)$ be a Hom-Lie algebra and $\rho:\g\longrightarrow\gl(V)$  a representation of $(\g,[\cdot,\cdot]_\g,\phi)$ on $V$ with respect to $\beta\in GL(V)$. A linear map $T:V\rightarrow \g$ is called a Hom-$\huaO$-operator  if $T$ satisfies
    \begin{eqnarray}
    \label{o1}T\circ \beta&=&\phi\circ T;\\
       \label{o2}[Tu,Tv]_{\frkg}&=&T\big(\rho(T(\beta^{-1}(u)))(v)-\rho(T(\beta^{-1}(v)))(u)\big).
       \end{eqnarray}
  \end{defi}
 By Proposition \ref{coobracket}, we have the following example of Hom-$\huaO$-operators.
  \begin{ex}{\rm Let $(\g,[\cdot,\cdot]_\g,\phi)$ be a Hom-Lie algebra. Then $r\in\wedge^2\g$ satisfies the classical Hom-Yang-Baxter equation \eqref{CYBE} and condition \eqref{eq:conr} if and only if $r^\sharp\circ(\phi^{-1})^*$ is a Hom-$\huaO$-operator associated to the coadjoint representation $\ad^\star$.}
\end{ex}

\begin{ex}{\rm
  Let $(\g,[\cdot,\cdot]_\g,\phi)$ be the 2-dimensional Hom-Lie algebra given in Example \ref{ex:tri}. Let $\{e_1,e_2\}$ be a basis of $\g$ and $\{e^1,e^2\}$ the dual basis. Then $\left(\begin{array}{cc}-1&1\\-1&0\end{array}\right):\g^*\longrightarrow\g$ is an Hom-$\huaO$-operator associated to the coadjoint representation $\ad^\star$.
  }
\end{ex}

Next, we consider the semi-direct product Hom-Lie algebra $\g\ltimes_{\rho^\star}V^*$. Any linear map $T: V\rightarrow\g$ can be view as an element $\bar{T}\in\otimes^2(\g\oplus V^*)$ via
$$\bar{T}(\xi+u,\eta+v)=\langle T(u),\eta\rangle,\quad\forall \xi+u,\eta+v\in\g^*\oplus V.$$
Let $\kappa$ be the exchange operator acting on the tensor space, then $r:=\bar{T}-\kappa(\bar{T})$ is skew-symmetric.

\begin{thm}\label{thm:HYB}
Let $(\g,[\cdot,\cdot]_\g,\phi)$ be a Hom-Lie algebra, $\rho:\g\longrightarrow\gl(V)$  a representation of $(\g,[\cdot,\cdot]_\g,\phi)$ on $V$ with respect to $\beta\in GL(V)$ and $T:V\rightarrow\g$  a linear map satisfying $T\circ\beta=\phi\circ T$. Then  $r=\bar{T}-\kappa(\bar{T})$ is a solution of the classical Hom-Yang-Baxter equation in the Hom-Lie algebra $\g\ltimes_{\rho^\star}V^*$ if and only if $T\circ\beta$ is an Hom-$\huaO$-operator.
\end{thm}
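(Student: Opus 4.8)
The plan is to carry out the computation inside the Hom-Lie algebra $\mathfrak{d}:=\g\ltimes_{\rho^\star}V^*$, whose underlying space is $\g\oplus V^*$, whose structure map is $\psi:=\phi\oplus(\beta^{-1})^*$, and whose bracket is $[x+\xi,y+\eta]_{\mathfrak{d}}=[x,y]_\g+\rho^\star(x)(\eta)-\rho^\star(y)(\xi)$. Fixing a basis $\{v_i\}$ of $V$ with dual basis $\{v^i\}$ of $V^*$ and regarding $T$ as an element of $\g\otimes V^*\subset\mathfrak{d}\otimes\mathfrak{d}$, one gets $r=\bar{T}-\kappa(\bar{T})=\sum_i v^i\wedge T(v_i)\in\wedge^2\mathfrak{d}$ and, computing the associated skew map, $r^\sharp(\xi+u)=T(u)-T^*(\xi)$ for $\xi\in\g^*$, $u\in V$. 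Using this, the hypothesis $T\circ\beta=\phi\circ T$, and its transpose $\beta^*\circ T^*=T^*\circ\phi^*$, I would first check that condition \eqref{eq:conr} holds automatically for $r$ inside $\mathfrak{d}$ (with $\phi$ replaced by $\psi$), so that the classical Hom-Yang-Baxter equation $[r,r]_{\mathfrak{d}}=0$ is a legitimate statement here. I would also record that, granted $T\circ\beta=\phi\circ T$, axiom \eqref{o1} for $T\circ\beta$ holds automatically, and, since $\phi$ is an automorphism, axiom \eqref{o2} for $T\circ\beta$ is equivalent by Definition \ref{ooperator} to the single identity
$$[Tu,Tv]_\g=T\big(\rho(Tu)(v)-\rho(Tv)(u)\big),\qquad\forall\,u,v\in V.$$

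The heart of the proof is to expand $[r,r]_{\mathfrak{d}}=\sum_{i,j}[\,v^i\wedge T(v_i),\,v^j\wedge T(v_j)\,]_{\mathfrak{d}}$ using the graded bracket \eqref{eq:HG}. Of the four terms produced for each $(i,j)$, the one coming from the two $V^*$-entries vanishes because $V^*$ is abelian in $\mathfrak{d}$, and each of the remaining three---a $[\cdot,\cdot]_\g$-term and two $\rho^\star$-terms---lands in the single summand $\g\wedge\wedge^2V^*$ of $\wedge^3\mathfrak{d}$. Thus $[r,r]_{\mathfrak{d}}$ is equivalent to a skew-symmetric bilinear map $V\times V\to\g$; pairing it against an arbitrary $\theta\in\g^*$ and $u\wedge v\in\wedge^2V$, I would rewrite each $\rho^\star$ in terms of $\rho$ via \eqref{eq:new1gen}, collapse the dual-basis sums ($\sum_i\langle v^i,\cdot\rangle v_i=\idd_V$), and use $T\circ\beta=\phi\circ T$ together with the representation axiom $\rho(\phi(x))\circ\beta=\beta\circ\rho(x)$ to absorb all stray powers of $\phi$ and $\beta$. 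The outcome should be an identity of the form
$$\frac{1}{2}\big\langle\theta,\,[r,r]_{\mathfrak{d}}(u,v)\big\rangle=\big\langle\theta,\,[Tu,Tv]_\g-T\big(\rho(Tu)(v)-\rho(Tv)(u)\big)\big\rangle$$
(possibly after a harmless substitution $u\mapsto\beta^{k}u$), the precise analogue for $\mathfrak{d}$ of formula \eqref{eq:formu} in Proposition \ref{coobracket}. Given this, $[r,r]_{\mathfrak{d}}=0$ for all $u,v$ if and only if the displayed identity of the previous paragraph holds, i.e.\ if and only if $T\circ\beta$ is a Hom-$\huaO$-operator; both directions follow at once.

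I expect the main obstacle to be precisely the bookkeeping in the middle step: faithfully tracking $\psi=\phi\oplus(\beta^{-1})^*$ through each wedge factor and converting every occurrence of $\rho^\star$---whose defining formula \eqref{eq:new1gen} already carries a $\phi^{-1}$ and a $\beta^{-2}$---back to $\rho$ so that all exponents cancel; it is this accounting that forces the operator in the conclusion to be $T\circ\beta$ rather than $T$, matching the $\beta^{-1}$ inside \eqref{o2}. As an alternative, slightly less hands-on route, one may instead apply the example following Definition \ref{ooperator} to the Hom-Lie algebra $\mathfrak{d}$: it identifies $[r,r]_{\mathfrak{d}}=0$ (given \eqref{eq:conr}, which is automatic here) with the statement that $r^\sharp\circ(\psi^{-1})^*$ is a Hom-$\huaO$-operator for the coadjoint representation $\ad^\star$ of $\mathfrak{d}$, and then, using $r^\sharp\circ(\psi^{-1})^*(\xi+u)=(T\circ\beta)(u)-T^*\big((\phi^{-1})^*\xi\big)$, one checks blockwise that this is equivalent to $T\circ\beta$ being a Hom-$\huaO$-operator for $(\rho,\beta)$. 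Either way, it is the skew-symmetrization $r=\bar{T}-\kappa(\bar{T})$ that makes the cross terms recombine into this one clean identity.
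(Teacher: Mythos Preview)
Your proposal is correct and follows essentially the same approach as the paper: write $r=\sum_i v^i\wedge T(v_i)$ in a basis, expand $[r,r]_{\mathfrak{d}}$ via the graded bracket \eqref{eq:HG}, use that $V^*$ is abelian in $\mathfrak d$, unwind $\rho^\star$ back to $\rho$ via \eqref{eq:new1gen}, and match the surviving $\g$-component against the Hom-$\huaO$-operator identity for $T\circ\beta$. Your extra observations---that \eqref{eq:conr} holds automatically for $r$ in $\mathfrak d$ and that, under $T\circ\beta=\phi\circ T$, axiom \eqref{o2} for $T\circ\beta$ simplifies to $[Tu,Tv]_\g=T\big(\rho(Tu)(v)-\rho(Tv)(u)\big)$---are correct and make the argument cleaner than the paper's, which leaves these points implicit.
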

\begin{proof}
Let $\{v_1,\cdots,v_n\}$ be a basis of $V$ and $\{v^1,\cdots,v^n\}$ be its dual basis. It is obvious that $\bar{T}$ can be expressed by $\bar{T}=v^i\otimes T(v_i)$. Here the Einstein summation convention is used. Therefore, we can write $r=v^i\wedge T(v_i)$. By direct computations, we have
\begin{eqnarray*}
~[r,r]_{\g\ltimes_{\rho^\star}V^*}
&=&[v^i\wedge T(v_i),v^j\wedge T(v_j)]_{\g\ltimes_{\rho^\star}V^*}\\
&=&(\beta^{-1})^*(v^i)\wedge(\beta^{-1})^*(v^j)\wedge[T(v_i),T(v_j)]_\g\\
&&+(\beta^{-1})^*(v^i)\wedge[T(v_i),v^j]_{\g\ltimes_{\rho^\star}V^*}\wedge\phi(T(v_j))\\
&&-(\beta^{-1})^*(v^j)\wedge[v^i,T(v_j)]_{\g\ltimes_{\rho^\star}V^*}\wedge\phi(T(v_i))\\
&=&\langle(\beta^{-1})^*(v^i),v_m\rangle v^m \wedge\langle(\beta^{-1})^*(v^j),v_n\rangle v^n\wedge[T(v_i),T(v_j)]_\g\\
&&+\langle(\beta^{-1})^*(v^i),v_m\rangle v^m\wedge\langle\rho^\star(T(v_i))(v^j),v_n\rangle v^n\wedge T(\beta(v_j))\\
&&+\langle(\beta^{-1})^*(v^j),v_n\rangle v^n\wedge\langle\rho^\star(T(v_j))(v^i),v_m\rangle v^m\wedge T(\beta(v_i))\\
&=&v^m\wedge v^n\wedge\langle v^i,\beta^{-1}(v_m)\rangle\langle v^j,\beta^{-1}(v_n)\rangle[T(v_i),T(v_j)]_\g\\
&&-v^m\wedge v^n\wedge\langle v^j,\rho\big{(}T(\beta^{-1}(v_i))\big{)}(\beta^{-2}(v_n))\rangle\langle v^i,\beta^{-1}(v_m)\rangle  T(\beta(v_j))\\
&&+v^m\wedge v^n\wedge\langle v^i,\rho\big{(}T(\beta^{-1}(v_j))\big{)}(\beta^{-2}(v_m))\rangle\langle v^j,\beta^{-1}(v_n)\rangle  T(\beta(v_i))\\
&=&v^m\wedge v^n\wedge
\big{(}[T(\beta^{-1}(v_m)),T(\beta^{-1}(v_n))]_\g
-T\beta\big{(}\rho(T(\beta^{-2}(v_m)))(\beta^{-2}(v_n))\big{)}\\
&&+T\beta\big{(}\rho(T(\beta^{-2}(v_n)))(\beta^{-2}(v_m))\big{)}\big{)}\\
&=&v^m\wedge v^n\wedge
\big{(}[T\beta(\beta^{-2}(v_m)),T\beta(\beta^{-2}(v_n))]_\g
-T\beta\big{(}\rho(T\beta(\beta^{-1}\cdot\beta^{-2}(v_m)))(\beta^{-2}(v_n))\big{)}\\
&&+T\beta\big{(}\rho(T\beta(\beta^{-1}\cdot\beta^{-2}(v_n)))(\beta^{-2}(v_m))\big{)}\big{)}.
\end{eqnarray*}

Furthermore, since $T\circ\beta=\phi\circ T$, it is obvious that $T\circ\beta$ satisfies $(T\circ\beta)\circ\beta=\phi\circ(T\circ\beta).$ Thus, $r$ is a solution of the classical Hom-Yang-Baxter equation in the Hom-Lie algebra $\g\ltimes_{\rho^\star}V^*$ if and only if $T\circ\beta$ is a Hom-$\huaO$-operator.
\end{proof}


\begin{thebibliography}{999}


\bibitem{AEM} F. Ammar, Z. Ejbehi and A. Makhlouf, Cohomology and Deformations
of Hom-algebras. \emph{J. Lie Theory} 21 (2011), no. 4, 813-836.

\bibitem{BS}
V. G. Bardakov and M. Singh, Extensions and automorphisms of Lie algebras. \emph{J. Algebra  Appl.} 16 (2017), 1750162, 15 pp.


\bibitem{BM}
S. Benayadi and A. Makhlouf, Hom-Lie Algebras with Symmetric
Invariant NonDegenerate Bilinear Forms. \emph{J. Geom. Phys.} 76 (2014), 38-60.

\bibitem{caisheng}
L. Cai and Y. Sheng, Hom-big brackets: theory and applications. \emph{SIGMA}, 12 (2016), 014, 18 pages.

\bibitem{CIP}
J. M. Casas, M. A. Insua and N. Pacheco, On universal central extensions of Hom-Lie algebras. \emph{Hacet. J. Math. Stat.} 44 (2015), no. 2, 277-288.

\bibitem{D} V. Drinfeld, Hamiltonian structure on the Lie groups,
Lie bialgebras and the geometric sense of the classical Yang-Baxter
equations. {\em Soviet Math. Dokl.}, 27 (1983), 68-71.

\bibitem{ELMS}
O. Elchinger, K. Lundengard, A. Makhlouf and S. Silvestrov, Brackets with $(\sigma,\tau)$-derivations and $(p,q)$-deformations of Witt and Virasoro algebras. \emph{Forum Math.} 28 (2016), no. 4, 657673.


\bibitem{HLS}
J. Hartwig, D. Larsson and S. Silvestrov, Deformations of Lie
algebras using $\sigma$-derivations. \emph{J. Algebra} 295 (2006),
314-361.

 \bibitem{Jin1}
Q. Jin and X. Li,   Hom-Lie algebra structures on semi-simple Lie algebras. \emph{J. Algebra} 319 (2008), no. 4, 1398-1408.

\bibitem{Kupershmidt2}
B. A. Kupershmidt, What a classical $r$-matrix really is. \emph{J.
Nonlinear Math. Phys.} \textbf{6} (1999), 448-488.



\bibitem{LGT}
C. Laurent-Gengoux and J. Teles, Hom-Lie Algebroids. \emph{J. Geom.  Phys.} 68 (2013), 69-75.


\bibitem{LD1}
D. Larsson and S. Silvestrov,  Quasi-hom-Lie algebras, central
extensions and 2-cocycle-like identities. \emph{ J. Algebra} 288
(2005), 321-344.

\bibitem{LD2}
D. Larsson and S. Silvestrov, Quasi-Lie algebras. \emph{Contemp.
Math.} 391 (2005), 241-248.



\bibitem{MS1}
 A. Makhlouf and S. Silvestrov, Notes on Formal Deformations of Hom-associative and Hom-Lie
 Algebras. \emph{Forum Math.}, 22 (2010), no. 4, 715-739.

\bibitem{MS2} A. Makhlouf and S. Silvestrov, Hom-algebra
structures. \emph{J. Gen. Lie Theory Appl.} Vol. 2 (2008), No. 2,
51-64.

\bibitem{Makhlouf-Hopf}
A. Makhlouf and F. Panaite, Hom-L-R-smash products, Hom-diagonal crossed products and the Drinfeld double of a Hom-Hopf algebra. \emph{J. Algebra} 441 (2015), 314–343.





\bibitem{sheng3}
Y. Sheng, Representations of Hom-Lie algebras. \emph{Algebr. Represent. Theory} 15 (2012), no. 6, 1081-1098.

\bibitem{sheng1}
Y. Sheng and C. Bai, A new approach to hom-Lie bialgebras. \emph{J. Algebra} 399 (2014), 232-250.

\bibitem{SX}
Y. Sheng and Z. Xiong, On hom-Lie algebras.  \emph{ Linear Multilinear Algebra.}  63 (2015), No. 12, 2379-2395.




\bibitem{Jin2}
W. Xie, Q. Jin and W.   Liu,   Hom-structures on semi-simple Lie algebras. \emph{Open Math.} 13 (2015), 617-630.

\bibitem{Yao0}
 D. Yau, Hom-algebras and homology. 	\emph{J. Lie Theory} 19 (2009), 409-421.

\bibitem{Yao1}
 D. Yau, The Hom-Yang-Baxter equation, Hom-Lie algebras, and quasi-triangular bialgebras. \emph{J. Phys. A} 42 (2009), no. 16, 165202, 12 pp.


 \bibitem{Yao2}
D.  Yau,   Hom-quantum groups: I. Quasi-triangular Hom-bialgebras. \emph{J. Phys. A} 45 (2012), no. 6, 065203, 23 pp.

\bibitem{Yao3}
D. Yau, The classical Hom-Yang-Baxter equation and Hom-Lie bialgebras. \emph{Int. Electron. J. Algebra} 17 (2015), 11-45.

\end{thebibliography}
\end{document}